\newcommand{\dend}{\operatorname{Dend}}
\newcommand{\bt}{\mathbb{Y}}
\newcommand{\nct}{\operatorname{NCT}}
\newcommand{\ncp}{\operatorname{NCP}}
\newcommand{\ZZ}{\mathbb{Z}}
\newcommand{\QQ}{\mathbb{Q}}
\newcommand{\TA}{\mathbb{A}}
\newcommand{\Id}{\operatorname{Id}}
\newcommand{\tm}{\mathbb{M}}
\newcommand{\ms}{\mathcal{M}}
\newcommand{\ps}{\mathscr{P}}
\newcommand{\proj}{\mathsf{P}}
\newcommand{\under}{\backslash}
\renewcommand{\over}{/}
\newcommand{\Y}{\epsfig{file=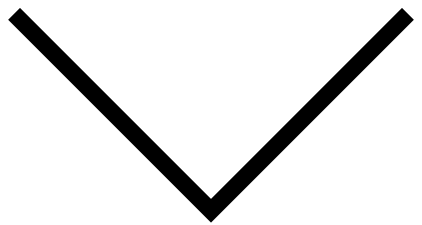,height=2mm}}
\newcommand{\lft}{\epsfig{file=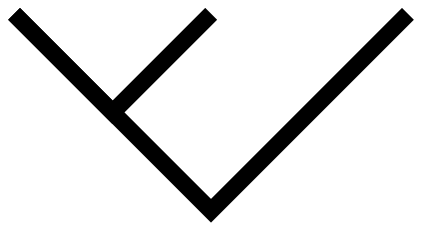,height=2mm}}
\newcommand{\rgt}{\epsfig{file=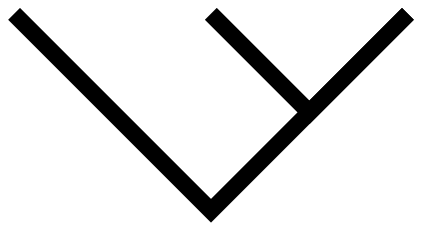,height=2mm}}
\newcommand{\gA}{\epsfig{file=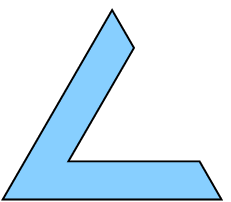,height=2.5mm}}
\newcommand{\dA}{\epsfig{file=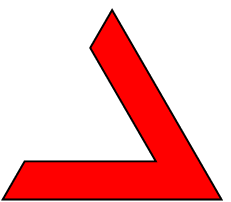,height=2.5mm}}
\newcommand{\mA}{\epsfig{file=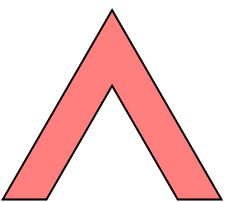,height=2.5mm}}
\newcommand{\gAs}{\epsfig{file=Garbre.eps,height=1.5mm}}
\newcommand{\dAs}{\epsfig{file=Darbre.eps,height=1.5mm}}
\newcommand{\mAs}{\epsfig{file=Marbre.eps,height=1.5mm}}
\newcommand{\hmin}{\widehat{0}}
\newcommand{\hmax}{\widehat{1}}
\newtheorem{theorem}{Theorem}[section] 
\newtheorem{proposition}[theorem]{Proposition} 
\newtheorem{corollary}[theorem]{Corollary} 
\newtheorem{lemma}[theorem]{Lemma} 
\newtheorem{definition}{Definition}
\newenvironment{proof}{\begin{trivlist}\item{\bf{Proof.}}}
  {\hfill\rule{2mm}{2mm}\end{trivlist}}
\title{Some dendriform functors}
\author{F. Chapoton}
\date{\today}
\begin{document}

\maketitle

\begin{abstract}
  We make a first step towards categorification of the dendriform
  operad, using categories of modules over the Tamari lattices. This
  means that we describe some functors that correspond to part of the
  operad structure.
\end{abstract}

\section{Introduction}

The notion of dendriform algebra is one of several new kinds of
algebras introduced by Jean-Louis Loday around 2000. In brief, a
dendriform algebra is an associative algebra, together with a
decomposition of the associative product as the sum of two binary
products, with some appropriate axioms. Loday has proved that the free
dendriform algebras can be described using classical combinatorial
objects called planar binary trees.

Some of the most interesting properties of these algebras can be
formulated in terms of the corresponding operad $\dend$. Loday has
shown that the operad $\dend$ is Koszul. Later, it was
proved in \cite{anticyclic} that the operad $\dend$ is anticyclic.

There has been several hints that the dendriform structures (algebras
and operad) are closely related to a natural partial order on planar
binary trees, called the Tamari poset. First, there is an associative
product on free dendriform algebras, and the product of two planar
binary trees can be described as an interval in a Tamari poset \cite{lr_order}. Next,
the anticyclic structure of the operad $\dend$ is given by a
collection of matrices than can be described directly from the Tamari
poset \cite{coxeter_tamari}.

The global aim of this article would be to categorify the operad
$\dend$. An operad is essentially a collection of Abelian groups and
linear maps between them. One has to define Abelian categories and
functors between them, in such a way that the Grothendieck group
construction recovers the initial operad.

Such a categorification has already been obtained in
\cite{diassociative} for the Koszul dual cooperad of the dendriform
operad. This dual cooperad describes dialgebras, which is another one
of Loday's kinds of algebras. This was though a much simpler
situation, involving only quivers of type $\TA$ and their products.

For the operad $\dend$, we will only present here some partial
results. The starting idea is to use the category of modules over the
Tamari poset as a categorification for the Abelian group spanned by
planar binary trees. In this article, we have obtained functors
corresponding to the following linear maps:
\begin{itemize}
\item  the first composition map $\circ_1 : \dend(m)\otimes \dend(n)\to \dend(m+n-1)$,
\item  the associative product $* : \dend(m)\otimes \dend(n)\to \dend(m+n)$,
\item  a new associative product $\# : \dend(m)\otimes \dend(n)\to \dend(m+n-1)$.
\end{itemize}
The product $\#$ has been explained to the author by Jean-Christophe
Aval and Xavier Viennot, in the setting of Catalan alternative
tableaux, see \cite{aval_viennot_preprint} and \cite{viennot_fpsac07}
for related works.

We identify the usual basis of the dendriform algebra and operad,
indexed by planar binary trees, with the basis of the Grothendieck
groups of the Tamari posets coming from simple modules. There is
another basis of the Grothendieck groups, coming from projective
modules. This will be called the basis of projective elements.

One important tool in this article is a small set-operad contained in
$\dend$, introduced in \cite{moules}. This sub-operad can be described using
noncrossing configurations in a regular polygon. We will in
particular show that the basis of projective elements is contained in
this sub-operad.

The reason why we have only partial results is the following : the
other composition maps of the operad $\dend$ do not preserve the set
of projective elements. This makes more difficult to define the
corresponding functors, even if it is possible to guess what they
should be.

\section{General setup}

\subsection{Planar binary trees}

Let us first introduce very classical combinatorial objects, called
\textbf{planar binary trees}. They can be concisely defined as
follows: a planar binary tree is a either a dot $\circ$ or a pair of planar
binary trees $(x\,y)$.

This leads to a representation as a sequence of dots and parentheses,
such as
\begin{equation}
  (((\circ \circ) (\circ (\circ \circ))) (\circ ((\circ \circ) \circ))). 
\end{equation}

These objects can be converted into planar trees in a simple way. For
the previous expression, the result is depicted in Figure
\ref{expl_bt}. The $\circ$ elements become leaves of the tree.
Vertices of the tree correspond to pairs of matching parentheses. The
outermost pair of parentheses correspond to the root vertex.

\begin{figure}
  \begin{center}
    \epsfig{file=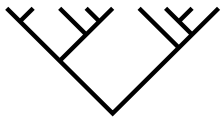,height=1.5cm} 
    \caption{A planar binary tree of degree $8$}
    \label{expl_bt}
  \end{center}
\end{figure}

We will always draw planar binary trees with their leaves at the top,
on an horizontal line, and their root at the bottom.

Let us define the degree of a planar binary tree to be the number of
vertices or the number of leaves minus one. Let $\bt_n$ be the set of
planar binary trees of degree $n$. For example, $\bt_1=\{\Y\}$ and
$\bt_2=\{\lft,\rgt\}$. The cardinal of $\bt_n$ is the Catalan number
$\frac{1}{n+1}\binom{2n}{n}$. Let $\bt$ be the union of the sets
$\bt_n$ for $n\geq 1$.

Let us now recall two basic combinatorial operations on $\bt$: the
over product $\over$ and the under product $\under$, both associative
and graded.

Let $x,y$ be planar binary trees. The planar binary tree $x\over y$ is
obtained by identifying the root of $x$ with the leftmost leaf of $y$.
Similarly $x\under y$ is obtained by identifying the root of $y$ with
the rightmost leaf of $x$.

For example, $\Y \over \Y = \lft$ and $\Y \under \Y =\rgt$.

Note that one has $(x\over y) \under z=x\over (y \under z )$.

\begin{lemma}
  \label{basic_alt}
  Any planar binary tree of degree at least $2$ can either be written
  $x\over y$ for some planar binary trees $x,y$ or $\Y \under x$ for
  some planar binary tree $x$.
\end{lemma}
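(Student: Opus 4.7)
The plan is to case-split on the shape of the root of the tree, exploiting the recursive definition. Since $t$ has degree at least $2$, in particular $t \neq \circ$, so by the recursive definition one can write $t = (A,B)$ for some planar binary trees $A, B$ (possibly equal to the leaf $\circ$), with at least one of $A,B$ not equal to $\circ$.

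First I would unpack the two operations at the root. The rightmost leaf of $\Y$ is its right leaf, so identifying the root of any tree $z$ with it produces exactly the tree $(\circ,z)$; that is, $\Y \under z = (\circ,z)$ for every planar binary tree $z$. Similarly, the leftmost leaf of the tree $(\circ, z)$ is the left $\circ$, and the leftmost leaf of $\Y$ is its left leaf, so $w \over (\circ, z) = (w, z)$ and $w \over \Y = (w, \circ)$ for all $w, z$.

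With these formulas in hand, the two cases are immediate. If $A = \circ$, then $B$ must have degree at least $1$ (otherwise $t$ would have degree $1$), so $B \in \bt$ and $t = (\circ, B) = \Y \under B$, which is the second form. If $A \neq \circ$, then $A \in \bt$ and I set $x := A$ together with $y := \Y$ when $B = \circ$ and $y := (\circ, B) = \Y \under B$ when $B \neq \circ$; in either case $y \in \bt$ and the formulas above give $x \over y = (A, B) = t$, which is the first form.

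There is no real obstacle here: once one writes down what $\Y \under (-)$ and $(-) \over (-)$ do at the root, the statement is essentially the observation that the left subtree of $t$ is either trivial (forcing the second form) or nontrivial (allowing the first form). The only point to watch is to check that the trees $x$ and $y$ produced really lie in $\bt$, i.e.\ have degree at least $1$, which is what the small case distinction on $B$ ensures.
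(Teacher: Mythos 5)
Your proof is correct and follows the same idea as the paper's (one-line) argument: split on whether the left subtree of the root is trivial, using $\Y\under B=(\circ,B)$ in the trivial case and $t=A\over y$ with $y=\Y$ or $y=\Y\under B$ otherwise. You simply spell out explicitly the root-level descriptions of $\over$ and $\under$ and the check that the factors lie in $\bt$, which the paper leaves implicit.
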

\begin{proof}
  If there is at least one vertex to the left of the root, then
  the first decomposition is possible. Else one must be in the second case.
\end{proof}

There is an obvious involution on planar binary tree, the (left-right)
reversal, that will be denoted $x \mapsto \overline{x}$. It exchanges
the over and under products : $\overline{x \over y}=\overline{y}
\under \overline{x}$

\subsection{Tamari poset}

\label{tamari_poset}

There is a partial order on the set $\bt_n$, called the Tamari poset.
It was introduced by Dov Tamari in \cite{dov_tamari} and proved there to be a
lattice.

\begin{figure}
  \begin{center}
    \epsfig{file=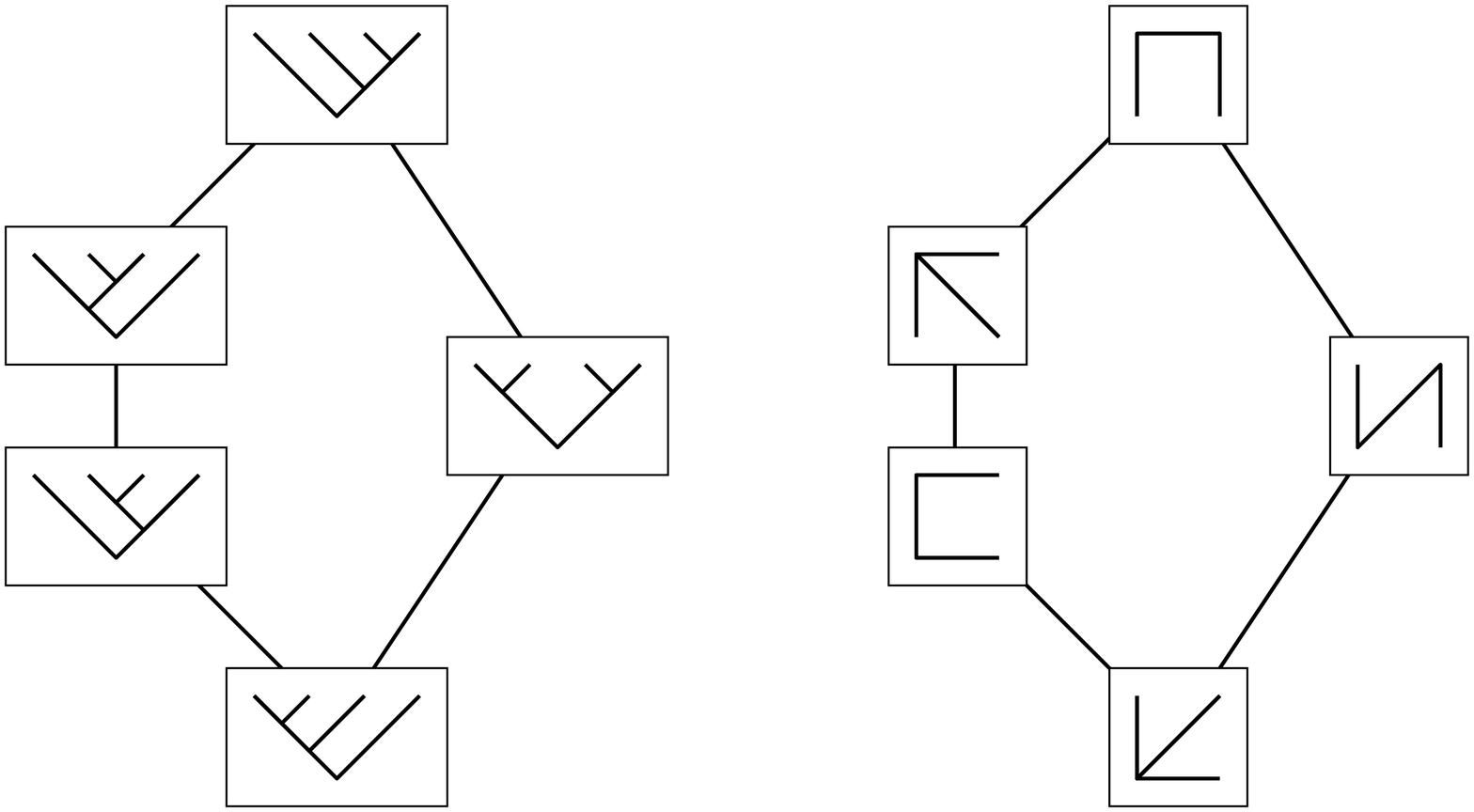,height=4cm} 
    \caption{Left: Tamari poset \quad Right: Good pivots between projective elements}
    \label{pentago}
  \end{center}
\end{figure}

This partial order can be defined as the transitive closure of
elementary moves: $x \leq y$ if $x$ is obtained from $y$ by a sequence
of local changes, replacing the configuration $\rgt$ by the
configuration $\lft$ somewhere in the tree.

The elementary moves can be described more formally as follows. 
\begin{itemize}
\item For any planar binary trees $a,b,c$, there is an elementary move
  from $a\over \Y \under (b\over \Y \under c)$ to $(a\over \Y \under
  b)\over \Y \under c$.
\item If $x \to y$ is an elementary move, then $x\over z \to y \over
  z$, $z\over x \to z \over y$ and $\Y \under x \to \Y \under y$ are
  also elementary moves.
\end{itemize}

In $\bt_n$, there is a unique minimum element $\hmin$ , which is the
left comb of degree $n$, and a unique maximum element $\hmax$, which
is the right comb of order $n$.

We will use the following convention: the Hasse diagram of the Tamari
poset is drawn with its maximum element at the top and its minimum
element at the bottom.  We will orient the edges of the Hasse diagram
in the decreasing way (from top to bottom).

It is well-known that the Hasse diagram of the Tamari poset is the
skeleton of a simple polytope, the associahedron or Stasheff polytope.

\begin{proposition}
  The Hasse diagram of the Tamari poset $\bt_n$ is a regular graph of
  order $n-1$.
\end{proposition}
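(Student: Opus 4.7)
The plan is to set up a bijection between the edges of the Hasse diagram incident to a fixed tree $t \in \bt_n$ and the \emph{internal edges} of $t$, by which I mean those edges joining two internal vertices. Granting such a bijection, the degree of $t$ in the Hasse diagram equals the number of internal edges of $t$, which is computed to be $n-1$ as follows: since $t$ has $n$ internal vertices, each with two children, it has $2n$ edges in total, of which exactly $n+1$ terminate at a leaf, leaving $n-1$ internal edges.

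For each internal edge $e$ joining a vertex $v$ to an internal child $w$, I would produce a unique cover relation at $t$. If $w$ is the right child of $v$, the subtree of $t$ rooted at $v$ has the shape of the left-hand side of the elementary move described in Section~\ref{tamari_poset}, and applying the move yields a tree $t' < t$ covered by $t$. If instead $w$ is the left child of $v$, the subtree rooted at $v$ has the shape of the right-hand side, and the inverse of the elementary move yields a tree $t'' > t$ covering $t$. In each case $e$ determines exactly one cover through $t$.

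To finish, I would verify bijectivity. Injectivity is routine: elementary moves at distinct internal vertices $v$ alter disjoint local subtrees and therefore produce distinct neighbors. For surjectivity, every elementary move at $t$, up or down, is by definition performed at a specific internal vertex of $t$ whose relevant child is also internal, and this vertex together with that child recovers the internal edge associated to the cover. Summing up, $t$ has exactly $n-1$ neighbors in the Hasse diagram.

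The main obstacle is the surjectivity step: the inductive clause in the definition of elementary moves permits a move to be applied arbitrarily deep inside $t$, so one must argue that every such move is uniquely localized at a single internal vertex of the global tree. This is essentially a structural induction on the definition of an elementary move and presents no genuine difficulty, but it does require a careful formalization of ''the internal vertex at which the move takes place'' in order to make the correspondence with internal edges rigorous.
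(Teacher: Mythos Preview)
The paper does not actually supply a proof of this proposition: it is stated immediately after the remark that the Hasse diagram of $\bt_n$ is the $1$-skeleton of the associahedron, a simple polytope of dimension $n-1$, and the regularity of valence $n-1$ is treated as a known consequence of that fact.

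Your argument is a correct, self-contained combinatorial proof that avoids any appeal to the polytope. The bijection between cover relations through $t$ and internal edges of $t$ is the standard one, and your count $2n-(n+1)=n-1$ of internal edges is right. Two small points are worth tightening. For injectivity, the phrase ``alter disjoint local subtrees'' is not literally true when the two internal edges share a vertex; the cleaner justification is that a single rotation changes exactly one parent--child relation, so the pair $(t,t')$ determines the rotation site, and hence distinct internal edges give distinct neighbours. For surjectivity, as you note, the inductive description of elementary moves in \S\ref{tamari_poset} must be unwound to locate the global vertex at which the local replacement of $\rgt$ by $\lft$ occurs; this is a straightforward structural induction and your sketch is adequate.

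In short: the paper relies on the associahedron being simple, whereas you give a direct bijective argument. Both are valid; yours is more elementary and does not require knowing anything about the polytope.
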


The reversal is an anti-automorphism of the Tamari poset: $x\leq y
\Leftrightarrow \overline{y} \leq \overline{x}$.

\smallskip In this article, we will consider the Hasse diagram of the
Tamari poset as a quiver with relations. The arrows are the edges with
the decreasing orientation. Relations are given by all possible
equalities between paths. The category of representations of this
quiver with relations, which is equivalent to the category of modules
over the incidence algebra of the Tamari poset, will be denoted by
$\mod \bt_n$.

\subsection{Dendriform operad}

Let $\dend(n)$ be the free Abelian group $\ZZ \bt_n$. Then the
collection $(\dend(n))_{n \geq 1}$ can be given the structure of
an operad in the category of Abelian groups, called the Dendriform
operad \cite{loday_lnm}.

More precisely, the collection $(\dend(n))_{n \geq 1}$ is a
non-symmetric operad. This means that for all $m\geq 1$, $n \geq 1$
and for each $1 \leq i \leq m$, there is a linear map $\circ_i$ from
$\dend(m)\otimes \dend(n) \to \dend(m+n-1)$.  All these maps satisfy
the ``associativity'' conditions, \textit{i.e.} the operad
axioms. Furthermore, the distinguished element $\Y$ in $\dend(1)$
plays the role of a unit for the composition maps $\circ_i$.

There is a precise combinatorial description of the maps $\circ_i$, as
some kind of double shuffle of planar binary trees. As this
is not needed in the sequel, we will not recall it there.

The reversal map sends compositions to compositions:
\begin{equation}
  \label{compo_reverse}
  \overline{x \circ_i y} = \overline{x} \circ_{m+1-i} \overline{y},
\end{equation}
for $x\in \dend(m)$.

\subsection{Dendriform algebra}

Let $\dend$ be the direct sum of all Abelian groups $\dend(n)$ for $n
\geq 1$. Then there is an associative graded product $*$ on $\dend$,
which can be defined using the operad structure as follows:
\begin{equation}
  \label{def_star}
  x * y = ((\lft+\rgt) \circ_2 y) \circ_1 x.
\end{equation}

More concretely, by a result of Loday and Ronco in \cite{lr_order},
this associative product is also given by
\begin{equation}
  x * y =\sum_{ x \over y \leq z \leq x\under y } z.
\end{equation}

The product can also be informally described as follows: the product
of two planar binary trees $x$ and $y$ is the sum over all planar binary trees obtained by
shuffling the right side of $x$ with the left side of $y$.

For instance, one has $\Y * \Y=\lft+\rgt$.

\begin{proposition}
  The reversal of planar binary trees is an
  anti-automorphism of the $*$ product: $\overline{x * y}=\overline{y}
  * \overline{x}$.
\end{proposition}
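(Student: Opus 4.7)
The plan is to give the combinatorial proof based on the Loday--Ronco description of $*$. First I would apply reversal to both sides of $x*y = \sum_{x\over y \,\leq\, z \,\leq\, x\under y} z$ and use that reversal is a $\ZZ$-linear involution on $\dend$, obtaining
\begin{equation*}
  \overline{x * y} = \sum_{x\over y \,\leq\, z \,\leq\, x\under y} \overline{z}.
\end{equation*}
Next I would reindex this sum by setting $w = \overline{z}$; since the reversal is an anti-automorphism of the Tamari poset ($x \leq y \Leftrightarrow \overline{y}\leq \overline{x}$), the condition $x\over y \,\leq\, z \,\leq\, x\under y$ becomes $\overline{x \under y} \leq w \leq \overline{x \over y}$.

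Then I would use the identities exchanging the two products under reversal, namely $\overline{x\over y} = \overline{y}\under \overline{x}$ (stated in the excerpt) and, symmetrically, $\overline{x\under y} = \overline{y}\over \overline{x}$ (an immediate consequence, since reversal is an involution). These turn the summation range into $\overline{y}\over \overline{x} \,\leq\, w \,\leq\, \overline{y}\under \overline{x}$, which is exactly the Loday--Ronco interval for $\overline{y} * \overline{x}$. So the sum equals $\overline{y} * \overline{x}$, as wanted.

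There is almost no obstacle beyond keeping the roles of $x$ and $y$ straight when applying the two reversal identities; the sole content of the proposition is the compatibility of two previously established anti-symmetries (the poset-antiautomorphism property and the exchange of $\over$ and $\under$). If one preferred an operadic proof, one could alternatively start from $x*y = ((\lft+\rgt) \circ_2 y)\circ_1 x$, apply formula \eqref{compo_reverse} twice together with $\overline{\lft+\rgt}=\lft+\rgt$, and then invoke the parallel axiom $(\mu\circ_1 \overline{y})\circ_{n+1}\overline{x} = (\mu\circ_2 \overline{x})\circ_1 \overline{y}$ for $\mu=\lft+\rgt$; but the combinatorial route above is both shorter and more transparent.
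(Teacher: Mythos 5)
Your argument is correct, but it takes a different route from the paper. The paper's proof is the one you only sketch at the end: it starts from the operadic definition $x*y=((\lft+\rgt)\circ_2 y)\circ_1 x$, applies the compatibility $\overline{a\circ_i b}=\overline{a}\circ_{m+1-i}\overline{b}$ of reversal with compositions (noting $\overline{\lft+\rgt}=\lft+\rgt$), and then invokes the ``commutativity'' axiom of non-symmetric operads to identify $((\lft+\rgt)\circ_1\overline{y})\circ_{n+1}\overline{x}$ with $((\lft+\rgt)\circ_2\overline{x})\circ_1\overline{y}=\overline{y}*\overline{x}$ --- a two-line computation entirely internal to the operad formalism. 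Your main proof instead uses the Loday--Ronco interval description $x*y=\sum_{x/y\leq z\leq x\backslash y} z$, reindexes by $w=\overline{z}$ using that reversal is an anti-automorphism of the Tamari poset, and converts the endpoints via $\overline{x/y}=\overline{y}\backslash\overline{x}$ and $\overline{x\backslash y}=\overline{y}/\overline{x}$ (the latter correctly deduced from the former by involutivity). All steps check out, including the order-reversal of the interval. What the combinatorial route buys is transparency: the statement becomes visibly the compatibility of two anti-symmetries already on record. What it costs is reliance on the Loday--Ronco theorem, an external result the paper quotes but does not prove, whereas the paper's operadic argument needs only the definition of $*$, equation \eqref{compo_reverse}, and the operad axioms. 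Either proof is acceptable; just be aware that the ``alternative'' you dismiss as less transparent is in fact the one the paper gives, and it is equally short.
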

\begin{proof}
  One uses definition (\ref{def_star}), the fact that the reversal
  maps $\circ_1$ to $\circ_n$ by (\ref{compo_reverse}), and the
  ``commutativity'' axiom of operads.
\end{proof}

\medskip

Let us note that the over and under products can also be expressed
using the composition maps of the Dendriform operad:
\begin{equation}
  \label{def_over}
  x \over y = (y \circ_1 \lft)\circ_1 x
\end{equation}
and
\begin{equation}
  \label{def_under}
  x \under y = (x \circ_m \rgt)\circ_{m+1} y,
\end{equation}
if $x \in \bt_m$.

Let us give now a few useful relations.

\begin{lemma}
  One has 
  \begin{align}
  \label{star_over}
    x \over (y*z)&=(x\over y)*z,\\
\label{circ_over}
    (x \circ_1 y)\over z&=(x\over z)\circ_1 y,\\
\label{star_circ}
(x \circ_1 y)*z&=(x*z)\circ_1 y.
  \end{align}
\end{lemma}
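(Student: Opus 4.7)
The plan is to reduce all three identities to direct applications of the operad associativity axioms, starting from the formulas (\ref{def_over}) for the over product and (\ref{def_star}) for the star product. In each case I will rewrite both sides as iterated $\circ_i$ compositions in $x$, $y$, $z$, $\lft$, $\rgt$, and then reassociate; no combinatorial manipulation of trees should be needed.

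For (\ref{circ_over}) I expect only a single reassociation. Expanding the left-hand side using (\ref{def_over}) gives $(x \circ_1 y) \over z = (z \circ_1 \lft) \circ_1 (x \circ_1 y)$, and the nested operad axiom $A \circ_1 (B \circ_1 C) = (A \circ_1 B) \circ_1 C$ rewrites this as $\bigl((z \circ_1 \lft) \circ_1 x\bigr) \circ_1 y$, which is $(x \over z) \circ_1 y$ by (\ref{def_over}) again. The proof of (\ref{star_circ}) is completely parallel: use (\ref{def_star}) to replace $z \circ_1 \lft$ by $(\lft + \rgt) \circ_2 z$ throughout, and the same single reassociation closes the argument.

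The remaining identity (\ref{star_over}) takes one more step. I would set $W = (\lft + \rgt) \circ_2 z$ so that $y * z = W \circ_1 y$, and then expand
\[
x \over (y*z) = \bigl((W \circ_1 y) \circ_1 \lft\bigr) \circ_1 x.
\]
Two successive uses of the nested axiom, first pushing the $\circ_1 \lft$ inside to get $\bigl(W \circ_1 (y \circ_1 \lft)\bigr) \circ_1 x$, and then pushing the $\circ_1 x$ past $W$, bring the expression to $W \circ_1 \bigl((y \circ_1 \lft) \circ_1 x\bigr) = W \circ_1 (x \over y)$, which is exactly $(x \over y) * z$. I do not anticipate any real obstacle; the only point to be careful about is verifying each time that the outer composition index falls within the block grafted by the inner one, so that the nested form of associativity (rather than the parallel one) is the relevant axiom.
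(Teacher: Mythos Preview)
Your proposal is correct and follows exactly the approach the paper intends: the paper's own proof is simply the sentence ``This is an exercise in the Dendriform operad, using (\ref{def_star}), (\ref{def_over}) and (\ref{def_under}),'' and you have carried out precisely that exercise. The only difference is that you supply the details of the operad reassociations, which the paper leaves to the reader.
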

\begin{proof}
  This is an exercise in the Dendriform operad, using
  (\ref{def_star}), (\ref{def_over}) and (\ref{def_under}).
\end{proof}

Let us introduce a bilinear form on $\dend(n)$. Let $x=\sum_{s\in\bt_n} x_s s$
and $y=\sum_{t\in\bt_n} y_t t $ be elements of $\dend(n)$. One defines
\begin{equation}
  \langle x , y \rangle = \sum_{s\leq t\in \bt_n} x_s y_t \mu(s,t),
\end{equation}
where $\mu$ is the Möbius function of the Tamari poset. The Möbius
function $\mu$ is known to have values in $\{-1,0,1\}$ (see
\cite{pallo,BjWa2}).

This bilinear form is called the Euler form. Note that this is not
symmetric. The Euler form has a natural meaning in representation
theory, namely it comes from the alternating sum of dimensions of Ext
groups in the category $\mod \bt_n$.

Let $E$ be the associated quadratic form $E(x)=\langle x,x \rangle$.

\subsection{Anticyclic structure}

The operad $\dend$ is in fact an anticyclic operad. This means that
there exists, for each $n\geq 1$, a linear endomorphism $\tau$ of
$\dend(n)$ satisfying $\tau(\Y)=-\Y$, $\tau^{n+1}=\Id$
and the following compatibility conditions with the composition maps
$\circ_i$ of the Dendriform operad:
\begin{align}
\label{anticyclic0}  \tau(x \circ_n y)&=-\tau(y)\circ_1 \tau(x),\\
\label{anticyclic1}  \tau(x \circ_i y)&=\tau(x)\circ_{i+1} y \quad \text{ if }1\leq
  i<n, 
\end{align}
where $x \in \dend(n)$ and $y\in\dend(m)$.

We have proved in \cite{coxeter_tamari} that there exists a linear endomorphism
$\theta$ of $\dend(n)$ such that $\tau=(-1)^n \theta^2$ and with the
following properties:
\begin{align}
\label{theta0}    \theta(\Y)&=-\Y,\\
\label{theta1}    \theta(x \backslash y)&=-\theta(x) * \theta(y),\\
\label{theta2}    \theta(x * y)&=-\theta(x) / \theta(y),\\
\label{theta3}    \theta^{-1}(x / y)&=-\theta^{-1}(x) * \theta^{-1}(y),\\
\label{theta4}    \theta^{-1}(x * y)&=-\theta^{-1}(x) \backslash \theta^{-1}(y). 
\end{align}

The endomorphism $\theta$ is defined in \cite{coxeter_tamari} using
only the Tamari partial order on $\bt_n$. In fact, $\theta$ has a
natural meaning in representation theory: it comes from the
Auslander-Reiten translation, which is an auto-equivalence of the
derived category of $\mod \bt_n$. It follows from this definition and
the fact that reversal is an anti-automorphism of the Tamari poset
that $\theta$ has the following property:
\begin{equation}
\label{theta_reverse}
  \theta(\overline{x})=\overline{\theta^{-1}(x)}.
\end{equation}

\section{The operad of noncrossing plants}

\subsection{Noncrossing combinatorics}

Let us now introduce other combinatorial objects: noncrossing trees
and noncrossing plants.

Let $n\geq 1$. Let $O_n$ be a convex polygon with $n+1$ vertices, with
a distinguished side called the \textbf{base side}, that we will use
as bottom side. If $n\geq 2$, the other sides are numbered from $1$ to
$n$ in the clockwise order. If $n=1$, by convention, there is only one
side, which is the base side and the side $1$.

A \textbf{noncrossing tree} of degree $n$ is a subset of the set of
edges between vertices of the polygon $O_n$ such that
\begin{itemize}
\item No two edges cross (but they can meet at their end points),
\item There is no cycle,
\item The collection is maximal with respect to these properties.
\end{itemize}

Let $\nct_n$ be the set of noncrossing trees in $O_n$.

Remark: noncrossing trees can be identified with exceptional
collections (up to permutation) in the derived category of the quiver
of type $\TA$ (see \cite{araya}).

\begin{figure}
  \begin{center}
    \epsfig{file=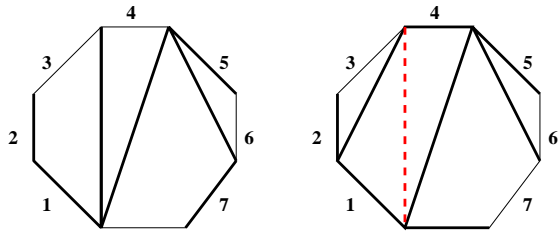,height=3cm} 
    \caption{A noncrossing tree and a noncrossing plant in $O_7$}
    \label{expl_nct}
  \end{center}
\end{figure}

A \textbf{noncrossing plant} of degree $n$ is a disjoint pair of
subsets of the set of edges between vertices of the polygon $O_n$,
called numerator edges and denominator edges, such that
\begin{itemize}
\item No two edges cross (but they can meet at their end points),
\item Each cycle of denominator edges surrounds exactly one numerator edge.
\item Each numerator edge is inside a cycle of denominator edges.
\item The collection is maximal with respect to these properties.
\end{itemize}

Let $\ncp_n$ be the set of noncrossing plants in $O_n$.

Note that noncrossing trees can be identified with noncrossing plants
without numerator edges. 

Remark: the numerator edges cannot be on the boundary of the polygon $O_n$.

\medskip

An \textbf{angle} in a noncrossing tree is a pair of edges with a
common endpoint $v$ that are adjacent in the ordered set of edges
incident to $v$.

The $3$ vertices involved in an angle define a triangle. Define the
base side of this triangle to be the edge which is closest to the base
side of the polygon $O_n$. An angle is said to of type $\gA,\mA$ or
$\dA$ according to the noncrossing tree obtained by restriction to the
triangle.

Let $N_{\gAs}(P)$, $N_{\mAs}(P)$ and $N_{\dAs}(P)$ be the numbers of angles of type $\gA$, $\mA$ and $\dA$ in a noncrossing tree $P$.

\begin{lemma}
  In any noncrossing tree $P$ of degree $n$, one has $n-1$ angles:
  $N_{\gAs}(P)+N_{\mAs}(P)+N_{\dAs}(P)=n-1$.
\end{lemma}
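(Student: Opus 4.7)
The plan is to count angles vertex by vertex and then use the fact that a maximal noncrossing acyclic edge set on the $n+1$ vertices of $O_n$ is a spanning tree with $n$ edges.

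First I would observe that since $P$ is acyclic and maximal (and any isolated vertex of the polygon could always be joined to an adjacent vertex without creating a crossing or a cycle), every one of the $n+1$ vertices of $O_n$ is incident to at least one edge of $P$; combined with acyclicity and maximality, $P$ is a spanning tree, so $|E(P)| = n$ and $\sum_v d_v(P) = 2n$, where $d_v(P)$ is the degree of vertex $v$ in $P$.

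Next I would count the angles at a fixed vertex $v$. Because $v$ lies on the convex polygon $O_n$, all edges of $P$ incident to $v$ leave $v$ into the same closed half-plane bounded by the line tangent to the polygon at $v$. Consequently, the set of edges at $v$ carries a natural \emph{linear} (not cyclic) order, and the number of angles at $v$ — that is, pairs of edges consecutive in this order — is exactly $d_v(P) - 1$ whenever $d_v(P) \geq 1$.

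Summing over all $n+1$ vertices, the total number of angles is
\begin{equation*}
  N_{\gAs}(P)+N_{\mAs}(P)+N_{\dAs}(P) \;=\; \sum_v \bigl(d_v(P)-1\bigr) \;=\; 2n - (n+1) \;=\; n-1,
\end{equation*}
which is the claim. The only subtle point — and the step I would be most careful with — is the linear (rather than cyclic) ordering of edges at each vertex; this is the feature that distinguishes noncrossing trees on a convex polygon from arbitrary plane trees and is what produces the clean $d_v - 1$ count.
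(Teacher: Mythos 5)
Your argument is correct in substance, and it fills in a proof that the paper simply dismisses as ``an easy combinatorial exercise,'' so there is no competing argument in the paper to compare against; your vertex-by-vertex count is a perfectly natural way to do the exercise. Two small remarks. First, the step ``no isolated vertex $+$ acyclic $+$ maximal $\Rightarrow$ spanning tree'' is slightly too quick as written: absence of isolated vertices does not by itself give connectivity. But the same trick you used for isolated vertices closes the gap: if $P$ had two or more components, then walking around the boundary of the convex polygon one finds two \emph{adjacent} boundary vertices lying in different components, and the boundary edge between them crosses nothing and creates no cycle, contradicting maximality. Hence $P$ is connected and spanning, with $n$ edges and $\sum_v d_v = 2n$. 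Second, to identify the total number of angles with $N_{\gAs}(P)+N_{\mAs}(P)+N_{\dAs}(P)$ you should note that every angle does receive exactly one of the three types: the third side of the triangle determined by an angle cannot belong to $P$ (it would close a $3$-cycle), so the restriction of $P$ to that triangle consists of exactly the two edges of the angle and is one of $\gA$, $\mA$, $\dA$. With these two points made explicit, your count $\sum_v (d_v-1) = 2n-(n+1) = n-1$, resting on the linear (not cyclic) order of the edges at a vertex of a convex polygon, is exactly right and is indeed the crux of the computation.
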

\begin{proof}
  This is an easy combinatorial exercise.
\end{proof}

\subsection{Operad structure}

In the article \cite{moules}, it was proved that the smallest
sub-operad (in the category of sets) of $\dend$ containing the
elements $\lft,\lft+\rgt,\rgt$ of $\dend(2)$ can be described by
noncrossing plants.

The composition of noncrossing plants is given by gluing as follows.
Let $P\in\ncp_m$ and $Q\in \ncp_n$. Let us describe $P\circ_i Q$ as a
noncrossing plant in $O_{m+n-1}$. First consider the polygon obtained
by identification of the base side of $O_n$ with the side $i$ of
$O_m$. By choosing appropriate deformations of $O_n$ and $O_m$, one
can assume that this polygon is convex and can be identified with
$O_{m+n-1}$, with base side the base side of $O_m$. Then one takes the
union of $P$ and $Q$ inside this glued polygon, with a special care
along the gluing edge. If the gluing edge belongs both to $P$ and $Q$,
it is kept in $P\circ_i Q$. If it belongs to just one of $P$ or $Q$,
then it is not kept in $P\circ_i Q$. If it does not belong to $P$ nor
to $Q$, then it is replaced by a numerator edge in $P\circ_i Q$.

Let us say that a noncrossing tree is \textbf{based} if it contains
the base side.

\begin{lemma}
  \label{pas_plante}
  Let $P$ and $Q$ be noncrossing trees. Then $P\circ_i Q$ is a
  noncrossing tree if and only if $Q$ is based or $P$ contains the
  border side $i$.
\end{lemma}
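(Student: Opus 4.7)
The plan is to work directly from the gluing description of the operadic composition given just above the lemma, and use the characterization that a noncrossing tree is exactly a noncrossing plant with no numerator edges.

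First I would observe that since $P$ and $Q$ are noncrossing trees, they have no numerator edges to begin with. When we form $P\circ_i Q$ by deforming $O_n$ and $O_m$ and gluing along the side $i$ of $O_m$ with the base side of $O_n$, the edges of $P$ and of $Q$ become edges of $O_{m+n-1}$, and none of these can be numerator edges: their status in $P\circ_i Q$ is unchanged because each of them is an honest denominator (tree) edge in the constituent plant. So the only edge whose status is potentially altered by the gluing procedure is the gluing edge itself, which, inside $O_{m+n-1}$, is either a boundary side of one of the two original polygons or an interior diagonal.

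Next I would apply the three-case rule stated in the definition of composition to the gluing edge: (i) if it belongs to both $P$ and $Q$, it is kept (as a denominator edge); (ii) if it belongs to exactly one of them, it is not kept; (iii) if it belongs to neither, it is replaced by a numerator edge. The condition ``$P$ contains the border side $i$'' means exactly that the gluing edge is in $P$, and ``$Q$ is based'' means exactly that it is in $Q$. Hence a numerator edge appears in $P\circ_i Q$ if and only if we are in case (iii), that is, if and only if both conditions fail simultaneously.

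Taking the contrapositive, $P\circ_i Q$ has no numerator edges iff $Q$ is based or $P$ contains the border side $i$. Since $P\circ_i Q$ is a noncrossing plant (by the results of \cite{moules} recalled above) and since a noncrossing plant is a noncrossing tree precisely when it has no numerator edges, this is exactly the claim. There is no real obstacle here: the lemma is essentially a direct reading of the gluing rule, and the only thing one has to be careful about is to confirm that no spurious numerator edges can be created away from the gluing edge, which is clear because all other edges are inherited unchanged from $P$ or $Q$.
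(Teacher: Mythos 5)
Your proof is correct and follows the same route as the paper, which simply notes that the only failure mode is the creation of a numerator edge at the gluing edge when it belongs to neither $P$ nor $Q$; you have just spelled out that observation in detail.
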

\begin{proof}
  This follows from the explicit description of the composition maps
  given above. The only case that should be avoided is the case when a
  numerator edge is created during composition.
\end{proof}

The operad of noncrossing plants is generated by $\gA,\mA,\dA$
(\cite[Th. 6.3]{moules}).

The inclusion of the operad of noncrossing plants in $\dend$ is
the unique morphism of operad extending the following map on
generators
\begin{equation}
  \begin{cases}
 \gA \mapsto \lft,\\
 \mA \mapsto \lft+\rgt,\\
 \dA \mapsto \rgt.    
  \end{cases}
\end{equation}

In the context of noncrossing plants, the associative $*$ product
defined by (\ref{def_star}) is given by
\begin{equation}
  \label{prod_ncp}
  P * Q = (\mA \circ_2 Q) \circ_1 P. 
\end{equation}
This can be described as gluing $P$ and $Q$ on the left and right
sides of a triangle $\mA$.

\begin{lemma}
  The set of noncrossing trees is closed under the $*$ product.
\end{lemma}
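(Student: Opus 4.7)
The plan is to use the formula $P * Q = (\mA \circ_2 Q) \circ_1 P$ from (\ref{prod_ncp}) and apply Lemma \ref{pas_plante} twice. First I would recall that the generator $\mA$ is itself a noncrossing tree in $O_2$: it is the tree consisting of the two non-base sides $1$ and $2$ of the triangle (it omits only the base side, which explains why the corresponding sub-operad element fails to be a basis tree on its own). In particular $\mA$ contains side $2$.

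Given any noncrossing tree $Q$, Lemma \ref{pas_plante} then implies that $R := \mA \circ_2 Q$ is a noncrossing tree, since the ``$P$ contains border side $i$'' clause of the lemma is satisfied. Next I would unpack the gluing that defines $\circ_2$: identifying the base side of $O_n$ (which hosts $Q$) with side $2$ of $O_2$ (which hosts $\mA$) produces a polygon identified with $O_{n+1}$ whose side $1$ is precisely the old side $1$ of $\mA$. Since $\mA$ contains this edge, so does $R$.

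Now one applies Lemma \ref{pas_plante} a second time to $R \circ_1 P$: the noncrossing tree $R$ contains border side $1$, so for any noncrossing tree $P$ the composition $R \circ_1 P$ is a noncrossing tree. This is exactly $P * Q$, which therefore lies in $\nct_{m+n}$ as required.

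The only potentially subtle point is the identification of $\mA$ as the noncrossing tree consisting of the two border sides, and the verification that those border sides are really inherited as border sides of the larger polygon after gluing; both are immediate from the geometric description of composition, so I do not expect any real obstacle beyond this bookkeeping.
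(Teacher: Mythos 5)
Your proof is correct and follows essentially the same route as the paper: the paper's own (very terse) argument is precisely that formula (\ref{prod_ncp}) expresses $P*Q$ via compositions with $\mA$ that never create a numerator edge, which is exactly what your two applications of Lemma \ref{pas_plante} make explicit (using that $\mA$ contains its sides $1$ and $2$, and that side $1$ of $\mA$ survives as border side $1$ of the glued polygon). Your identification of $\mA$ as the triangle tree omitting the base side and the degree count $m+n$ are both right, so no gap remains.
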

\begin{proof}
  Indeed, formula \eqref{prod_ncp} shows that the product of
  noncrossing trees only involves compositions that do not create a
  numerator edge.
\end{proof}

\begin{lemma}
  \label{decompo_nct}
  Each noncrossing tree has a unique decomposition as a $*$ product of
  based noncrossing trees.
\end{lemma}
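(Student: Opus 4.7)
The idea is that the decomposition is controlled by the unique path from the left base endpoint to the right base endpoint of $T$, viewed as a tree. Given $T \in \nct_n$, write $V_0$ and $V_n$ for the two endpoints of the base side of $O_n$, and let $V_0 = A_0, A_1, \dots, A_k = V_n$ be the unique path from $V_0$ to $V_n$ in $T$. For $i = 1, \dots, k$, let $P_i$ denote the restriction of $T$ to the subpolygon whose vertices are $A_{i-1}, A_i$ together with those vertices of $O_n$ lying strictly between them on the arc opposite the base, taking $A_{i-1}A_i$ as its base. I claim that the decomposition is $T = P_1 * P_2 * \cdots * P_k$.

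First I would verify that each $P_i$ is a based noncrossing tree. It contains its base $A_{i-1}A_i$ by construction; noncrossing and acyclic properties are inherited from $T$; and it is spanning because, by the noncrossing property, any edge of $T$ joining an internal vertex of the $i$-th subpolygon to a vertex outside would have to cross one of the chords $A_{j-1}A_j$, which are themselves edges of $T$. So internal vertices can only reach the outside through $A_{i-1}$ or $A_i$, and hence are connected to them within $P_i$ itself.

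Next I would verify $T = P_1 * \cdots * P_k$ by edge-by-edge analysis. Using $P * Q = (\mA \circ_2 Q) \circ_1 P$ and the gluing rule for noncrossing plants, one checks inductively that, when all $P_i$ are based, the iterated product is a noncrossing tree whose edges are: the edges internal to each $P_i$ placed in its subpolygon, plus the chain $A_0 A_1, A_1 A_2, \dots, A_{k-1}A_k$ (the two sides of each inserted $\mA$-triangle, kept because the factor glued along them is based), and lacking the long base $V_0 V_n$. These are exactly the edges of $T$.

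For uniqueness, any decomposition $T = Q_1 * \cdots * Q_\ell$ with each $Q_j$ based yields, by the same analysis, a sequence $V_0 = B_0, B_1, \dots, B_\ell = V_n$ with each $B_{j-1} B_j \in T$, i.e.\ a path from $V_0$ to $V_n$ in the tree $T$. Since such a path in a tree is unique, we must have $\ell = k$ and $B_j = A_j$, and then $Q_j$ is forced to equal $P_j$. The main technical point will be the edge analysis for iterated $*$-products: one has to check that no successive gluing creates a numerator edge, which holds here because each gluing chord belongs to the inserted $\mA$, triggering the ``in exactly one factor'' case (where the edge is simply dropped) rather than the ``in neither factor'' case (which is what would produce a numerator edge).
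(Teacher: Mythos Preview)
Your approach is essentially the paper's: take the unique path in the tree between the two base endpoints and restrict over each edge of that path to obtain the based factors; the paper's proof is just a terse version of what you wrote. One small slip in your edge bookkeeping: it is not true that ``the two sides of each inserted $\mA$-triangle'' are all kept---in $(P_1*\cdots*P_{k-1})*P_k$ the left factor is \emph{not} based, so side~$1$ of the outer $\mA$ is dropped (as your own last paragraph implicitly acknowledges); the chain $A_0A_1,\dots,A_{k-1}A_k$ survives rather because each $A_{i-1}A_i$ is the base of $P_i$, kept at the moment $P_i$ is glued in and untouched thereafter. This does not affect the correctness of your argument.
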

\begin{proof}
  Let $P$ be a noncrossing tree. There is a unique path in $P$ from
  the left vertex of the base side to the right vertex of the base
  side. Each edge of this path can be considered as the base side of a
  based noncrossing tree, by restriction. Then $P$ is the $*$ product
  of these noncrossing trees in their natural order. Uniqueness is
  clear.
\end{proof}

From (\ref{def_over}) and (\ref{def_under}), the over product can be
restated as
\begin{equation}
  P\over Q = (Q \circ_1 \gA)\circ_1 P
\end{equation}
and the under product as
\begin{equation}
  P \under Q = (P \circ_m \dA)\circ_{m+1} Q,
\end{equation}
if $P \in \nct_m$.

One can easily see from these formulas that the set of noncrossing
trees is closed with respect to the over and under products.

\section{First description of projective elements}

We will from now on identify the Grothendieck group of the Tamari
poset $\bt_n$ with the Abelian group $\dend(n)=\ZZ\bt_n$ by sending
the simple module associated with a planar binary tree $T$ to the same
planar binary tree $T$ in the natural basis of $\dend(n)$.

Let $\proj(x)$ be the projective module for the Tamari poset associated with the vertex $x$. More precisely, $\proj(x)$ is the $\bt_n$-module defined at the level of vertices by a copy of $\QQ$ for each element of the interval $\{ y \in \bt_n \mid \hmin \leq y \leq x\}$ and the null vector space elsewhere, and by the identity map when possible and the $0$ map else.

For $x\in \bt_n$, let $\ps(x)$ be the sum of all elements of the interval $\{ y \in \bt_n \mid
\hmin \leq y \leq x\}$. These sums will be called projective elements.

The projective element $\ps(x)$ is therefore the image of the projective module $\proj(x)$ in the Grothendieck group of the Tamari poset.

\begin{lemma}
  \label{proj_over}
  One has $\ps(x/y)=\ps(x)/\ps(y)$.
\end{lemma}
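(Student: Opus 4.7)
I would prove the stronger claim that, for $x\in\bt_m$ and $y\in\bt_n$, the over product restricts to a bijection between the sets
\[
\{(z_1,z_2)\in\bt_m\times\bt_n\mid z_1\leq x,\ z_2\leq y\}
\quad\text{and}\quad
\{z\in\bt_{m+n}\mid z\leq x/y\}.
\]
The lemma is then immediate, since both $\ps(x)/\ps(y)$ and $\ps(x/y)$ expand as $\sum z_1/z_2$ over the same indexing set.

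Three ingredients establish the bijection. First, monotonicity of $/$: if $z_1\leq x$ and $z_2\leq y$, then $z_1/z_2\leq x/y$. This is direct from the inductive clause in the definition of elementary Tamari moves, since any move performed inside one factor persists after forming the over product. Second, injectivity at fixed degree: the root of $z_1$ inside $w=z_1/z_2$ is recovered as the unique vertex on the leftmost branch of $w$ whose descendant subtree contains exactly $m$ internal vertices. The subtree sizes strictly decrease along this branch, and any strict ancestor of the gluing vertex has a subtree containing all of $z_1$ together with at least one further vertex of $z_2$, hence of size $>m$. Third, and most substantial, is surjectivity: any $w\leq x/y$ decomposes as $w=z_1/z_2$ with $z_1\leq x$ and $z_2\leq y$.

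For surjectivity I would induct on the length of a saturated chain from $w$ up to $x/y$. Given some $w'$ with $w\lessdot w'\leq x/y$ and, by induction, a decomposition $w'=z_1'/z_2'$ with $z_1'\leq x$ and $z_2'\leq y$, the covering move $w'\to w$ is a Tamari rotation somewhere in $w'$. The crucial observation is that in $w'$ the root of $z_1'$ occupies the position of the former leftmost leaf of $z_2'$, so it sits as a \emph{left} child of its parent vertex in $w'$. Since every Tamari rotation pivots on a vertex together with its \emph{right} child, the rotation cannot straddle the boundary between the two factors: it must take place entirely within $z_1'$ or entirely within $z_2'$. This yields a decomposition $w=z_1/z_2$ with either $z_1\leq z_1'\leq x$ and $z_2=z_2'$, or $z_1=z_1'$ and $z_2\leq z_2'\leq y$, completing the induction.

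The main obstacle is the boundary analysis in the surjectivity step: one must verify that the gluing vertex always sits at a left-child position so that no Tamari rotation can cross from one factor into the other. The monotonicity and injectivity ingredients are straightforward structural facts about the over product and the inductive definition of elementary moves.
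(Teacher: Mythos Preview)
Your approach is correct and essentially the same as the paper's: both establish the bijection $(a,b)\mapsto a/b$ between $\{a\leq x\}\times\{b\leq y\}$ and $\{c\leq x/y\}$, with the key point being that any elementary Tamari move out of $a/b$ stays within one factor. You simply spell out in more detail what the paper leaves as ``one has to check,'' in particular the left-child observation that prevents a rotation from straddling the gluing vertex, and you add an explicit injectivity argument that the paper takes for granted.
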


\begin{proof}
  For every $x$ and $y$, there is a simple bijection
  \begin{equation}
    \begin{cases}
   \{ \hmin \leq a\leq x\} \times \{\hmin \leq b \leq y\}  \simeq \{ \hmin \leq c \leq x\over y\}\\
           (a,b) \mapsto a\over b.
    \end{cases}
  \end{equation}
  The existence of this map follows from properties of elementary
  moves. To define its inverse, one has to check that an elementary
  move starting from $a\over b$ is either of the shape $a\over b \to
  a'\over b$ or of the shape $a\over b \to a\over b'$.
\end{proof}

\begin{lemma}
  \label{proj_under}
  One has $\ps(\Y \under x)=\Y * \ps(x)=\mA \circ_2 \ps(x)$.
\end{lemma}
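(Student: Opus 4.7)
The plan is to handle the two equalities separately. The identity $\Y*\ps(x) = \mA\circ_2\ps(x)$ is essentially automatic: from \eqref{def_star} one has $\Y*y = ((\lft+\rgt)\circ_2 y)\circ_1\Y = \mA\circ_2 y$ for each tree $y$, since $\Y$ is the unit for $\circ_1$, and extending linearly gives the claim. So the real content is to prove $\ps(\Y\under x) = \Y*\ps(x)$.

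For this I would invoke the Loday--Ronco formula $\Y*w = \sum_{\Y/w\leq z\leq \Y\under w} z$, which rewrites the right-hand side as
\[
\Y*\ps(x) = \sum_{w\leq x}\ \sum_{\Y/w\leq z\leq \Y\under w} z.
\]
Comparing with $\ps(\Y\under x) = \sum_{z\leq\Y\under x} z$, the lemma reduces to showing that the projection $(w,z)\mapsto z$ is a bijection from $\{(w,z) : w\leq x,\ z\in[\Y/w,\Y\under w]\}$ to $\{z : z\leq\Y\under x\}$.

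The key combinatorial input I would establish is that the intervals $\{[\Y/w,\Y\under w] : w\in\bt_n\}$ partition $\bt_{n+1}$, yielding a canonical retraction $\pi:\bt_{n+1}\to\bt_n$ with $z\in[\Y/\pi(z),\Y\under\pi(z)]$. One then shows that $\pi$ is order-preserving and satisfies $\pi(\Y\under x) = x$. Together these immediately give the bijection: for $z\leq\Y\under x$ we have $\pi(z)\leq x$, so $(\pi(z),z)$ is in the source; conversely, for any source pair one has $z\leq\Y\under w\leq\Y\under x$ by monotonicity of $\Y\under\cdot$.

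Concretely, $\pi$ should be defined as ``contract the distinguished $\Y$-vertex along the leftmost spine of $z$'', this vertex being the one inserted in the shuffle realizing $z$ as a summand of $\Y*\pi(z)$. The hard part will be verifying that $\pi$ is well-defined and monotone; this requires a case analysis on the elementary Tamari moves recalled in Section \ref{tamari_poset}, showing that an elementary move $z\to z'$ either acts on the distinguished $\Y$-vertex (leaving $\pi(z)$ unchanged) or acts elsewhere and descends to an elementary move $\pi(z)\to\pi(z')$. The resulting bookkeeping is delicate but routine.
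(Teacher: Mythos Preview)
Your argument is correct. The second equality is indeed immediate from the unit property, and your reduction of the first equality to a bijection
\[
\{(w,z) : w\leq x,\ \Y/w \leq z \leq \Y\under w\} \;\longrightarrow\; \{z : z \leq \Y\under x\},\qquad (w,z)\mapsto z,
\]
is exactly right. The two ingredients you isolate --- that the intervals $[\Y/w,\Y\under w]$ partition $\bt_{n+1}$, and that the retraction $\pi$ is monotone --- are both true. Concretely, $\pi(z)$ is obtained by deleting the leftmost leaf of $z$ and contracting the resulting unary vertex (equivalently, replacing the bottom vertex $v$ of the left spine by its right subtree). With this description, the partition claim is immediate (each $z$ is the insertion of $\Y$ at the unique position $\ell-1$, where $\ell$ is the left-spine length of $z$), and monotonicity follows from the case analysis you sketch: an elementary move either occurs away from $v$ and descends verbatim, or occurs at $v$ (the local pattern $[\circ,[B,C]]\to[[\circ,B],C]$) and is killed by $\pi$, or occurs at $v$'s parent with $v$ as left child and descends to an elementary move after contraction.

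The paper's own proof, by contrast, is a one-line citation of \cite[Prop.~4.1]{int_tamari}, which records precisely the interval description you are reconstructing. So your route is not really different in spirit --- you are supplying, in self-contained form, the content of the cited proposition --- but it has the advantage of making the argument visible within the present paper rather than deferring it. The cost is the bookkeeping you acknowledge; the paper simply outsources that.
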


\begin{proof}
  This follows from the description of such intervals obtained in
  \cite[Prop. 4.1]{int_tamari}.
\end{proof}

\begin{proposition}
  \label{proj_alt}
  A projective element of degree at least $2$ can either be written as
  $P \over Q$ for some projective elements $P$ and $Q$ or as $\mA
  \circ_2 P$ for some projective element $P$.
\end{proposition}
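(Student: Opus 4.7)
The proposition follows almost immediately by combining Lemma~\ref{basic_alt} with Lemmas~\ref{proj_over} and~\ref{proj_under}, so the plan is essentially to unwind the definitions and apply these three results in sequence.

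First I would pick a projective element of degree $n \geq 2$, which by definition is of the form $\ps(x)$ for some $x \in \bt_n$. Applying Lemma~\ref{basic_alt} to $x$ gives two mutually exclusive cases: either $x = x_1 \over x_2$ for some planar binary trees $x_1, x_2$, or $x = \Y \under x'$ for some planar binary tree $x'$.

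In the first case I would invoke Lemma~\ref{proj_over} to write
\[
\ps(x) \;=\; \ps(x_1 \over x_2) \;=\; \ps(x_1) \over \ps(x_2),
\]
exhibiting $\ps(x)$ as an over product of projective elements $P = \ps(x_1)$ and $Q = \ps(x_2)$. In the second case I would invoke Lemma~\ref{proj_under} to write
\[
\ps(x) \;=\; \ps(\Y \under x') \;=\; \mA \circ_2 \ps(x'),
\]
exhibiting $\ps(x)$ as $\mA \circ_2 P$ with $P = \ps(x')$ projective.

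There is no real obstacle here: the combinatorial content of the statement is already contained in the case analysis of Lemma~\ref{basic_alt}, and the two preceding lemmas transport that dichotomy from planar binary trees to the associated projective elements. The only point requiring a moment's care is to check that in each case the constituents are themselves honest projective elements (i.e.\ of the form $\ps(\cdot)$), but this is immediate from the way the decomposition of $x$ produces subtrees whose projective elements appear directly in the formula.
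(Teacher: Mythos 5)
Your proposal is correct and follows exactly the paper's argument: apply Lemma~\ref{basic_alt} to the underlying planar binary tree, then translate the two cases via Lemma~\ref{proj_over} and Lemma~\ref{proj_under} respectively. Nothing is missing, and the case analysis is the same as in the paper's proof.
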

\begin{proof}
  This follows from Lemmas \ref{basic_alt}, \ref{proj_over} and
  \ref{proj_under}
\end{proof}

\begin{proposition}
  \label{proj_1}
  If $P$ and $Q$ are projective elements, then $P \over Q$ is a projective element.
\end{proposition}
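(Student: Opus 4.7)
The plan is essentially to invoke Lemma \ref{proj_over} directly. By the definition of projective elements, if $P$ and $Q$ are projective then there exist planar binary trees $x$ and $y$ with $P = \ps(x)$ and $Q = \ps(y)$. I would then simply write
\begin{equation*}
  P \over Q \;=\; \ps(x) \over \ps(y) \;=\; \ps(x \over y),
\end{equation*}
where the second equality is precisely Lemma \ref{proj_over}. Since $x \over y$ is again a planar binary tree, $\ps(x \over y)$ is by definition a projective element, which concludes the argument.

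There is no real obstacle here: all of the combinatorial work has already been absorbed into Lemma \ref{proj_over}, whose proof established the key bijection between pairs of intervals below $x$ and $y$ on the one hand and the interval below $x \over y$ on the other. The present proposition is the direct translation of that bijection into a closure property of the subset of projective elements inside $\dend(n)$.
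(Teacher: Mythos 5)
Your argument is correct and is exactly the paper's own proof: write $P=\ps(x)$, $Q=\ps(y)$ and apply Lemma \ref{proj_over} to get $P\over Q=\ps(x\over y)$, which is a projective element by definition. Nothing further is needed.
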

\begin{proof}
  If $P=\ps(x)$, $Q=\ps(y)$ then $P\over Q=\ps(x\over y)$ by
  Lemma \ref{proj_over}.
\end{proof}

\begin{proposition}
  \label{proj_2}
   If $P$ is a projective element, then $\mA \circ_2
  P$ is a projective element. 
\end{proposition}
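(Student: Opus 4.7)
The plan is straightforward: this proposition should fall out immediately from Lemma \ref{proj_under}, which already provides the identity $\ps(\Y \under x) = \mA \circ_2 \ps(x)$. So I expect essentially no work beyond unpacking the definitions.

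More precisely, I would start by writing $P = \ps(x)$ for some planar binary tree $x \in \bt_n$, using the fact that projective elements are by definition of this shape. Then I would apply Lemma \ref{proj_under} to rewrite
\begin{equation*}
\mA \circ_2 P = \mA \circ_2 \ps(x) = \ps(\Y \under x).
\end{equation*}
Since $\Y \under x$ is a planar binary tree (of degree one larger than that of $x$), the right-hand side is a projective element by definition, which is what had to be shown.

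There is no real obstacle here: the entire content of the proposition is already packaged in Lemma \ref{proj_under}, so the proof is a two-line verification. The only thing one might want to double-check is that the degree bookkeeping is consistent (if $x\in\bt_n$ then $\Y\under x \in \bt_{n+1}$, and $\mA\circ_2 P$ indeed lives in $\dend(n+1)$), but this is automatic from the operad grading.
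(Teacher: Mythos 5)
Your proof is correct and is exactly the paper's argument: write $P=\ps(x)$ and invoke Lemma \ref{proj_under} to identify $\mA \circ_2 \ps(x)$ with $\ps(\Y\under x)$, which is a projective element by definition. Nothing further is needed.
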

\begin{proof}
  If $P=\ps(x)$ then $\mA \circ_2
  P=\ps(\Y\under x)$ by Lemma \ref{proj_under}.
\end{proof}

\begin{theorem}
  \label{projectives_are_nct}
  Projective elements are noncrossing trees containing the border side $1$.
\end{theorem}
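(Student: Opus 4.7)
The plan is to proceed by induction on the degree $n$, using the recursive decomposition furnished by Proposition \ref{proj_alt}. For the base case $n=1$, the only projective element is $\ps(\Y)=\Y$, which identifies with the unique noncrossing tree in $O_1$; by the convention for $O_1$, its sole edge is both the base side and the border side $1$.

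For the inductive step, suppose $P$ is a projective element of degree $n\ge 2$. By Proposition \ref{proj_alt}, one of two things happens: either $P=P_1/P_2$ with $P_1$ and $P_2$ projective elements of strictly smaller degree, or $P=\mA\circ_2 P_0$ with $P_0$ a projective element of degree $n-1$. By the inductive hypothesis, each of $P_1$, $P_2$, $P_0$ is a noncrossing tree containing border side $1$.

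In the first subcase I would rewrite $P_1/P_2=(P_2\circ_1\gA)\circ_1 P_1$ using the operadic expression for the over product and apply Lemma \ref{pas_plante} twice. The inner composition $P_2\circ_1\gA$ is a noncrossing tree because $P_2$ contains border side $1$; unwinding the gluing conventions, border side $1$ of the resulting polygon is border side $1$ of $\gA$, which $\gA$ itself contains. Consequently the outer composition at position $1$ with $P_1$ is again a noncrossing tree by Lemma \ref{pas_plante}, and border side $1$ of the final polygon coincides with border side $1$ of $P_1$, which $P_1$ contains by hypothesis. In the second subcase, $\mA\circ_2 P_0$ is a noncrossing tree by Lemma \ref{pas_plante} since $\mA$ contains border side $2$, and border side $1$ of the resulting polygon equals border side $1$ of $\mA$, which $\mA$ also contains.

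The only subtle point is the geometric bookkeeping of border sides under the gluing conventions for the operad of noncrossing plants: one needs to check directly that $\gA$ contains border side $1$, that $\mA$ contains border sides $1$ and $2$, and that in any composition $P\circ_i Q$ the first border side of the glued polygon is inherited from the first border side of the factor that supplies it. All of these are immediate from the explicit description of the composition, so no real obstacle is anticipated beyond careful tracking of side labels.
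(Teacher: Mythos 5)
Your proposal is correct and follows essentially the same route as the paper: induction on the degree via Proposition \ref{proj_alt}, handling the two cases $P=Q\over R$ and $P=\mA\circ_2 Q$ by means of Lemma \ref{pas_plante}. The only difference is that you spell out the bookkeeping the paper leaves implicit (writing $Q\over R=(R\circ_1\gA)\circ_1 Q$, checking that $\gA$ contains side $1$ and $\mA$ contains sides $1$ and $2$, and tracking which factor supplies border side $1$ of the glued polygon), and these verifications are accurate.
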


\begin{proof}
  By induction on the degree $n$. This is clearly true if $n=1$, for
  $\gA$ and $\mA$.

  Let $P$ be a projective element of degree $n\geq 2$. One can use
  Prop. \ref{proj_alt}.

  If $P$ can be written $Q/R$ with $Q$ and $R$ projective elements,
  then the induction hypothesis for $Q$ and $R$ implies (using Lemma
  \ref{pas_plante}) that $P$ is a noncrossing tree containing the
  border side $1$.

  Else $P$ can be written $\mA \circ_2 Q$ with $Q$ a projective element. Then
  the induction hypothesis for $Q$ implies (using Lemma
  \ref{pas_plante}) that $P$ is a noncrossing
  tree containing the border side $1$.
\end{proof}

\begin{figure}
  \begin{center}
    \epsfig{file=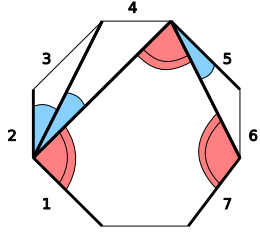,height=3cm} 
    \caption{Angles in a projective noncrossing tree}
    \label{colored}
  \end{center}
\end{figure}

\section{Projective elements and pivots}

\begin{proposition}
  \label{colored_angles}
  In any projective element $P$ of degree $n$, one has
  $N_{\gAs}(P)+N_{\mAs}(P)=n-1$ and $N_{\dAs}(P)=0$.
\end{proposition}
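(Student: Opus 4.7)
The plan is to prove the equivalent statement $N_{\dAs}(P) = 0$, from which $N_{\gAs}(P) + N_{\mAs}(P) = n - 1$ follows via the earlier angle-counting lemma. I will proceed by induction on $n$, using Proposition \ref{proj_alt} as the inductive tool. The base cases are immediate: for $n=1$ the only projective element is $\gA$ (no angles to count), and for $n=2$ the projective elements are $\gA$ and $\mA$, each carrying a single angle of its own type, never of type $\dA$.

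For the inductive step with $n \geq 3$, Proposition \ref{proj_alt} expresses $P$ either as $Q/R$ with $Q$ and $R$ projective of smaller degree, or as $\mA \circ_2 Q$ with $Q$ projective of degree $n-1$. In both cases I track angles under the corresponding operad composition of noncrossing trees (invoking Theorem \ref{projectives_are_nct} so that $Q$ and $R$ are indeed noncrossing trees), and argue that no $\dA$ angle is produced. In the case $P = Q/R$, the geometric composition places $Q$ and $R$ in two adjacent sub-polygons of $O_n$ whose base sides both lie on the base of $O_n$. Any triangle underlying an angle of $Q$ or $R$ remains entirely inside its sub-polygon, with its ``base side of triangle'' unchanged relative to the base of $O_n$, so those angles preserve their types and by induction contribute no $\dA$. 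The only potentially new angle appears at the common vertex where the two sub-polygons meet on the base; since its two edges both rise from a point on the base of $O_n$, the enclosing triangle has its base edge on the polygon base, forcing the angle to be of type $\gA$ or $\mA$.

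In the case $P = \mA \circ_2 Q$, the subtree $Q$ sits in a sub-polygon attached to a non-base side of the bottom triangle $\mA$. Every triangle inside $Q$ is farther from the base of $O_n$ than the gluing edge, so the ``base side of triangle'' is inherited from the base side of $Q$'s original polygon, and the induction hypothesis applies to conclude that no $\dA$ angle of $Q$ survives. The original angle of $\mA$ is of type $\mA$, and any new angle at the gluing vertex faces into $Q$, again yielding type $\gA$ or $\mA$.

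The hard part will be the geometric verification that the ``closest edge to the base'' convention used to assign an angle's type is preserved when $Q$ and $R$ are embedded as sub-polygons of $O_n$, and the concurrent check that the new angles produced at the two kinds of gluing vertices are never of type $\dA$. Both facts hinge on the observation that these two operad compositions glue sub-polygons in a manner that keeps the relevant new vertex on or directly above the base of $O_n$, so that the cyclic order of tree-edges at this vertex prohibits a $\dA$-angle. Once these geometric checks are carried out carefully, the induction closes.
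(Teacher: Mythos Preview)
Your approach is essentially the same as the paper's: induction on $n$ via Proposition~\ref{proj_alt}, then tracking angle types in each of the two cases. The paper is slightly more specific, asserting that there is exactly one new angle, of type $\gA$ in the $Q/R$ case and of type $\mA$ in the $\mA \circ_2 Q$ case, whereas you only argue that any new angle is not of type $\dA$; either suffices for $N_{\dAs}(P)=0$.

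One geometric inaccuracy to correct: in the case $P=Q/R$, the sub-polygons carrying $Q$ and $R$ do \emph{not} both have their bases along the base of $O_n$. Unwinding $Q/R=(R\circ_1\gA)\circ_1 Q$, only $R$'s base coincides with the base of $O_n$; the sub-polygon carrying $Q$ is attached along the left side of a $\gA$-triangle, so $Q$'s base is a diagonal of $O_n$ issuing from the bottom-left vertex. Your conclusion that the old angle types are preserved is still correct, but the justification should be that any triangle inside $Q$'s sub-polygon is separated from the base of $O_n$ only through $Q$'s base, so the ``base side of the triangle'' is the same whether computed in $Q$'s polygon or in $O_n$. The same remark applies in the $\mA\circ_2 Q$ case, where $Q$'s base is the right side of the bottom triangle rather than the base of $O_n$. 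With this adjustment your argument matches the paper's.
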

\begin{proof}
  By induction on the degree $n$. This is clearly true if $n=1$, for
  $\gA$ and $\mA$.

  Let $P$ be a projective element of degree $n\geq 2$. One can use
  Prop. \ref{proj_alt}.

  If $P$ can be written $Q/R$ with $Q$ and $R$ projective elements,
  then, by Th. \ref{projectives_are_nct}, angles in $P$ are just
  angles in $Q$, angles in $Q$ and a new angle of type $\gA$. Using
  the induction hypothesis for $Q$ and $R$, one gets the expected
  result.

  Else $P$ can be written $\mA \circ_2 Q$ with $Q$ a projective
  element. In this case, by Th. \ref{projectives_are_nct}, angles in
  $P$ are just angles in $Q$ and a new angle of type $\mA$. Using the
  induction hypothesis for $Q$, one gets the expected result.
\end{proof}

Let us now introduce the notion of \textbf{pivot} from one noncrossing tree $P$
to another one $Q$. One considers an angle of $P$. Let us call
$\alpha$ the common vertex to the two adjacent edges $e$ and $e'$ of
this angle, in trigonometric order. Let $e''$ be the third side of the triangle defined by
this angle. Let $Q$ be the noncrossing tree defined from $P$ by
removing $e$ and adding $e''$. This amounts to rotate clockwise the edge $e$.

\begin{center}
\begin{tikzpicture}
\draw (0,0) node[below left] {$\alpha$} -- node[below] {$e$} (2,0) -- node[right] {$e''$} (1,1.73) -- node[left] {$e'$} (0,0);
\end{tikzpicture}
\end{center}

When noncrossing trees are seen as exceptional collections (see
\cite{araya}), a pivot corresponds to a mutation.

\begin{definition}
  \label{good_pivot}
  Let $P$ and $Q$ be noncrossing trees. If one can go from $P$ to $Q$
  by a pivot replacing an angle of type $\gA$ by an angle of type
  $\mA$, then we will say that $P \to Q$ is a good pivot.
\end{definition}

The following two lemmas are then quite obvious.

\begin{lemma}
  \label{pivot1}
  If $P \to Q$ is a good pivot, then $\mA \circ_2 P \to \mA \circ_2 Q$
  is a good pivot.
\end{lemma}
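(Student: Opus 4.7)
The plan is to exploit the fact that composition at position $2$ with $\mA$ is geometrically a local operation: it glues a triangle onto the base side of $P$, producing a noncrossing tree $\mA \circ_2 P$ in $O_{m+1}$ whose structure coincides with that of $P$ everywhere outside the glued triangle. I would first formalise this: by Lemma \ref{pas_plante}, since $\mA$ contains all three border sides and in particular side $2$, the composition $\mA \circ_2 P$ is indeed a noncrossing tree (and likewise for $Q$). Moreover, as observed already in the proof of Proposition \ref{colored_angles}, the angles of $\mA\circ_2 P$ are exactly the angles of $P$ (embedded in the new polygon) together with one additional angle of type $\mA$ created at the apex of the glued triangle.

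Next I would argue that the pivot operation commutes with $\mA\circ_2(-)$. The good pivot $P\to Q$ is determined by an angle of type $\gA$ in $P$: it rotates one of the two edges of this angle around its incident vertex. This angle sits entirely in the ``$P$-part'' of $\mA\circ_2 P$, away from the glued triangle, so the corresponding angle inside $\mA\circ_2 P$ is still of type $\gA$ (base sides are preserved under the gluing because the base side of $\mA\circ_2 P$ is the base side of $\mA$, and the relative position of the angle's triangle with respect to this base is unchanged). Performing the same pivot inside $\mA\circ_2 P$ therefore removes and adds precisely the same two edges as the pivot $P\to Q$, so the resulting noncrossing tree is exactly $\mA\circ_2 Q$.

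Finally, since the type of an angle is determined by the local configuration around its three vertices, the new angle in $\mA \circ_2 Q$ created by the pivot is of the same type (namely $\mA$) as the one created by $P\to Q$. Hence $\mA\circ_2 P \to \mA\circ_2 Q$ is a good pivot. The only delicate point, and the one I would spend care on, is the observation that ``base side'' is preserved under the composition, so that the classification of angles into $\gA$, $\mA$, $\dA$ is unaffected by passing from $P$ to $\mA\circ_2 P$; this follows directly from the gluing rule, since the base of $\mA\circ_2 P$ descends from the base of $\mA$ and every triangle of $P$ retains the same edge closest to that base.
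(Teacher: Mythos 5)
Your argument is correct and is exactly the geometric locality reasoning the paper leaves implicit: the paper states Lemma \ref{pivot1} with no proof beyond calling it ``quite obvious'', and your verification that gluing $P$ along side $2$ of $\mA$ keeps every angle of $P$ with its type unchanged (the new base side being the base of the triangle), so that the same pivot can be performed inside $\mA \circ_2 P$ and yields $\mA \circ_2 Q$, is the intended justification. One small slip: $\mA$ does not contain all three sides of the triangle (a noncrossing tree cannot contain a cycle) but only sides $1$ and $2$; since it does contain the gluing side $2$, your appeal to Lemma \ref{pas_plante} still goes through.
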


\begin{lemma}
  \label{pivot2}
  If $P \to Q$ is a good pivot, then $ P \over R \to Q \over R$ is a
  good pivot and $ R \over P \to R \over Q$ is a good pivot.
\end{lemma}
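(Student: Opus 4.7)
My plan is to track the pivoted angle geometrically through the polygonal description of the over product. Recall from the paper that $P \over R = (R \circ_1 \gA) \circ_1 P$ and $R \over P = (P \circ_1 \gA) \circ_1 R$, and that operadic composition of noncrossing trees is realized by gluing the respective polygons along a single edge. So in both cases, $P$ embeds as a subpolygon of the composite polygon, with its base side identified either with a specific internal edge (case $P \over R$) or with the base side of the whole composite polygon (case $R \over P$, since the outer composition happens at side $1$ of $P$).

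The first step is to observe that all edges and all angles of $P$ survive into $P \over R$ and $R \over P$: the gluing modifies only the base side of $P$, which carries no angle of $P$. So the pivot move in $P \to Q$ (rotating a specific edge $e$ clockwise inside a triangle of $P$) makes perfect sense in the enlarged tree, and executing it produces exactly $P/R \to Q/R$ (respectively $R/P \to R/Q$).

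The crucial step is to check that the \emph{type} of each angle of $P$ is preserved under the embedding, since a good pivot is defined by type $\gA$ turning into type $\mA$. The type depends only on (i) which of the three sides of the triangle belong to the tree, and (ii) which of those three sides is closest to the base side of the ambient polygon. Condition (i) is preserved because $P$'s edges are preserved. For condition (ii), the key observation is that in both $P/R$ and $R/P$, any path from a triangle of $P$ down to the base side of the composite polygon must exit the subpolygon of $P$ through the base side of $P$; consequently, the ``closest to base'' designation of any triangle inside $P$'s subpolygon agrees with the designation it had inside $P$ alone.

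Combining these two steps, an angle of type $\gA$ in $P$ remains of type $\gA$ in $P/R$ and $R/P$, and after performing the same pivot, the resulting $\mA$-type angle of $Q$ appears as an $\mA$-type angle in $Q/R$ and $R/Q$. The only real obstacle is verifying the base-side-preservation claim in the $P \over R$ case, where the base of $P$ becomes an internal edge; but this follows directly from the concrete gluing prescription for $\circ_i$ recalled before Lemma~\ref{pas_plante}, which places $R$ between the base of the composite polygon and $P$'s subpolygon, so that $P$'s base side genuinely separates $P$ from the base direction.
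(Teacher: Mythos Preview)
Your argument is correct and makes explicit the geometric picture that the paper leaves entirely implicit (the paper declares both Lemma~\ref{pivot1} and Lemma~\ref{pivot2} ``quite obvious'' and gives no further proof). One small imprecision worth fixing: the sentence ``the gluing modifies only the base side of $P$, which carries no angle of $P$'' is not accurate as stated. In the $R\over P$ case the gluing edge along $P$'s subpolygon is side~$1$ of $P$, not its base side; and in either case the base side of $P$ can certainly be an edge of $P$ and participate in an angle. The correct reason all edges of $P$ survive is that each gluing edge adjacent to $P$'s subpolygon is also an edge of $\gA$ (which contains both its base side and its side~$1$), so by the composition rule the gluing edge is kept exactly when $P$ already contained it --- hence no edge of $P$ is ever lost. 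With this adjustment, your preservation-of-angle-type argument (via the order-preserving relabelling of vertices under $\circ_i$) goes through cleanly and yields the conclusion.
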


\begin{lemma}
  \label{maps_are_pivots}
  Let $x,y \in \bt_n$. Assume that there is an edge from $y$ to $x$ in
  the Hasse diagram of the Tamari poset. Then $\ps(x) \to \ps(y)$ is a
  good pivot.
\end{lemma}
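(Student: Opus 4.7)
The plan is to proceed by induction on $n$. The case $n=1$ is vacuous because $\bt_1$ admits no covering relations. For $n \geq 2$, I would decompose $x$ using Lemma \ref{basic_alt} and treat the two resulting cases in turn.

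First, if $x = \Y \under c$ for some $c \in \bt_{n-1}$, then $\ps(x) = \mA \circ_2 \ps(c)$ by Lemma \ref{proj_under}. Since the root of $x$ has a leaf as its left child, no $\lft$-subpattern of $x$ can be centred at the root of $x$, so the elementary move witnessing $y \gtrdot x$ must be located inside $c$. Hence $y = \Y \under c'$ with $c' \gtrdot c$ in $\bt_{n-1}$; the inductive hypothesis yields a good pivot $\ps(c) \to \ps(c')$, and Lemma \ref{pivot1} lifts it to $\ps(x) \to \ps(y)$.

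Second, if the root of $x$ has a non-trivial left subtree $a$, I would write $x = a/b$ canonically with $b = \Y$ or $b = \Y \under R$ (according to whether the right subtree of the root is trivial or not), so that $\ps(x) = \ps(a)/\ps(b)$ by Lemma \ref{proj_over}. The key combinatorial observation is that, for $n \geq 3$, the $\lft$-subpattern in $x$ realising the covering $y \gtrdot x$ lies entirely inside $a$ or entirely inside $b$. In these sub-cases, $y = a'/b$ with $a' \gtrdot a$, or $y = a/b'$ with $b' \gtrdot b$; the inductive hypothesis produces a good pivot on $\ps(a)$ or on $\ps(b)$, and Lemma \ref{pivot2} transports it to the desired good pivot $\ps(x) \to \ps(y)$. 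The only exceptional situation is $n=2$, where necessarily $x = \lft$ and $y = \rgt$; I would verify it directly, noting that $\ps(\lft) = \gA$ has a unique angle, of type $\gA$ by Proposition \ref{colored_angles}, whose pivot gives $\mA = \ps(\rgt)$.

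The main obstacle is the combinatorial claim that, for $n \geq 3$, no $\lft$-subpattern of $x = a/b$ can straddle the gluing point where the root of $a$ is identified with the leftmost leaf of $b$. The argument relies on the fact that, since $\deg a \geq 1$, the root of $a$ becomes an \emph{internal} vertex of $x$ and so cannot appear as a leaf child in any $\lft$-pattern; a case check on where the centre of a hypothetical spanning pattern could sit on the leftmost path of $b$ rules out all configurations except $a = \Y = b$, i.e., $x = \lft \in \bt_2$, which is disposed of as the base case above.
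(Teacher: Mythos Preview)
Your inductive scheme has a genuine gap in Case~2: the claim that for $n\geq 3$ no $\lft$-configuration of $x=a/b$ can straddle the gluing point is false. Recall that an elementary move upward from $x$ is determined by a pair $(v,v')$ of internal vertices of $x$ with $v'$ the left child of $v$; the subtrees hanging below are arbitrary, not required to be leaves. With your choice $b=\Y$ or $b=\Y\under R$, the root of $b$ (which is the root of $x$) always has the root of $a$ as its left child, and both are internal since $\deg a\geq 1$. This pair $(v,v')=(\text{root of }b,\text{root of }a)$ gives a perfectly good $\lft$-configuration straddling $a$ and $b$, for every $n\geq 2$. Concretely, take $x$ the left comb in $\bt_3$: then $a=\lft$, $b=\Y$, and the rotation at the root of $x$ produces a cover $y\gtrdot x$ that your induction does not reach. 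Your justification (``the root of $a$ is internal, so it cannot appear as a leaf child'') only rules out the root of $a$ being a \emph{leaf} of the pattern; it says nothing about it being the lower internal vertex $v'$.

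This straddling case is precisely the elementary move located at the root, and it cannot be absorbed into the induction: it must be handled directly. The paper does exactly this, writing the root move as the passage from $y=a\over(\Y\under(b\over(\Y\under c)))$ to $x=(a\over(\Y\under b))\over(\Y\under c)$, computing $\ps(x)$ and $\ps(y)$ via Lemmas~\ref{proj_over} and~\ref{proj_under}, and checking (Fig.~\ref{pivot_cover}) that they differ by a single good pivot. Once that base case is in hand, your inductive reduction for the non-root moves via Lemmas~\ref{pivot1} and~\ref{pivot2} is fine and coincides with the paper's ``located higher in the tree'' step. So the fix is not to change strategy but to add the missing root-move verification; your $n=2$ check is the smallest instance of it, not a substitute for the general one.
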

\begin{proof}
  Edges in the Hasse diagram correspond to elementary moves of planar
  binary trees. Assume first that the elementary move is located at
  the root of a planar binary tree. In the Hasse diagram, this
  corresponds to an edge from $a \over (\Y \under (b \over (\Y \under
  c)))$ to $ a \over (\Y \under b) \over (\Y \under c)$ for some planar binary trees $a,b,c$, possibly empty.

  The corresponding projective elements are $ \ps(a) \over (\Y * \ps(b)) \over (\Y * \ps(c))$ and
$\ps(a) \over (\Y * (\ps(b) \over (\Y * \ps(c))))$.

  In graphical terms, one can see that these projective elements are related by a good pivot as in Fig. \ref{pivot_cover}.

\begin{figure}
  \begin{center}
    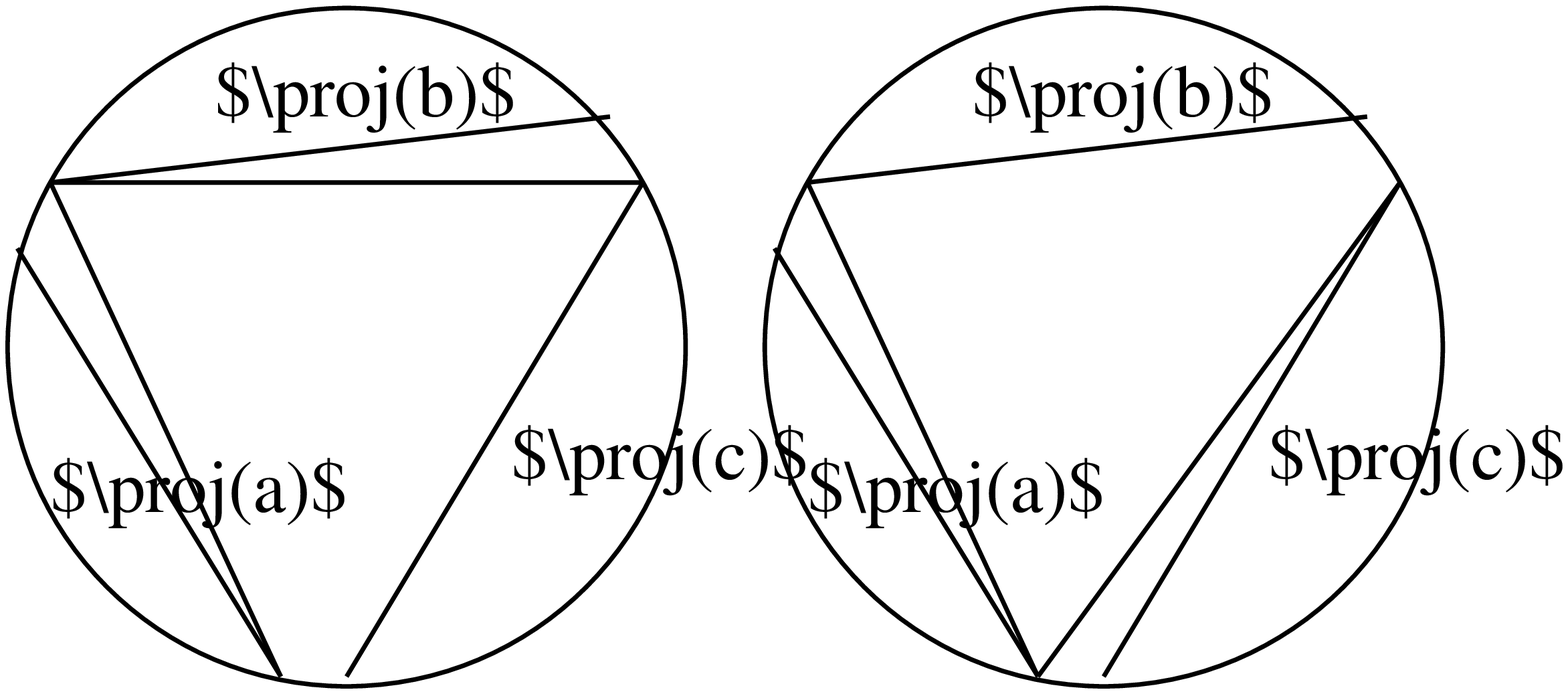
    \caption{The good pivot corresponding to an elementary move}
     \label{pivot_cover}
  \end{center}
\end{figure}

If the elementary move is located higher in the planar binary tree,
then one has to use the description of elementary moves given in \S
\ref{tamari_poset}. Then one concludes by Lemmas \ref{pivot1} and
\ref{pivot2}.

\end{proof}

\begin{lemma}
  \label{pivots_are_maps}
  Let $P$ be a projective element and let $Q$ be a noncrossing tree. If
  $P \to Q$ or $Q \to P$ is a good pivot, then $Q$ is a projective
  element. Moreover, each good pivot between projective elements correspond to an irreducible morphism of
  projective modules.
\end{lemma}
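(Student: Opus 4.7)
The plan is to show that the set of good pivots incident to a projective element $P = \ps(x)$ is in bijection with the set of Hasse-edges incident to $x$ in the Tamari poset $\bt_n$, by combining an upper bound coming from Proposition \ref{colored_angles} with a lower bound coming from the regularity of the Hasse diagram together with the construction in Lemma \ref{maps_are_pivots}.

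For the upper bound, Proposition \ref{colored_angles} says every angle of $P$ has type $\gA$ or $\mA$, with $N_{\gAs}(P) + N_{\mAs}(P) = n-1$. Each outgoing good pivot $P \to Q$ is determined by the $\gA$ angle of $P$ that it rotates clockwise, while each incoming good pivot $Q \to P$ is determined by the $\mA$ angle of $P$ that it produces (the anticlockwise inverse of a clockwise rotation being unique). Thus outgoing and incoming good pivots at $P$ correspond to disjoint sets of angles of $P$ (of types $\gA$ and $\mA$ respectively), so at most $N_{\gAs}(P) + N_{\mAs}(P) = n-1$ good pivots are incident to $P$ in either direction.

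For the lower bound, the Hasse diagram of $\bt_n$ is regular of order $n-1$, so $x$ has exactly $n-1$ incident Hasse-edges. By Lemma \ref{maps_are_pivots}, each such edge — whether below or above $x$ — produces a good pivot either outgoing from or incoming to $\ps(x)$. Distinct elementary moves take place at distinct locations in the planar binary tree and therefore act on distinct triangles of $P$, so the resulting good pivots are distinct. This gives an injection from $n-1$ Hasse-edges into at most $n-1$ good pivots, which must then be a bijection.

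Consequently, every noncrossing tree $Q$ related to $P$ by a good pivot equals $\ps(y)$ for some Hasse-neighbor $y$ of $x$, and so is a projective element. The moreover clause follows because, by the standard representation theory of incidence algebras of finite posets with all-paths-equal relations, irreducible morphisms between indecomposable projectives $\proj(x)$ and $\proj(y)$ are in bijection with the Hasse-edges between $x$ and $y$, which the above bijection identifies with good pivots between projective elements. The main delicate point is the injectivity of the map in the lower bound — that two distinct elementary moves at $x$ yield two distinct good pivots at $\ps(x)$ — which is transparent in the base case depicted in Figure \ref{pivot_cover} but needs to be tracked through the reductions of Lemmas \ref{pivot1} and \ref{pivot2}.
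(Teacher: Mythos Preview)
Your argument is correct and follows the same counting strategy as the paper: bound the number of good pivots incident to $P=\ps(x)$ above by $n-1$ using Proposition~\ref{colored_angles}, bound it below by $n-1$ using regularity of the Hasse diagram together with Lemma~\ref{maps_are_pivots}, and conclude that the injection is a bijection. Your write-up is in fact more explicit than the paper's on two points: you separate the outgoing pivots (indexed by $\gA$ angles) from the incoming ones (indexed by $\mA$ angles), and you flag the injectivity of the map from Hasse edges to good pivots. For the latter, note that injectivity is immediate once you observe that distinct Hasse-neighbours $y\neq y'$ of $x$ give distinct projective elements $\ps(y)\neq\ps(y')$, so the corresponding good pivots have different endpoints; there is no need to track the reductions through Lemmas~\ref{pivot1} and~\ref{pivot2}.
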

\begin{proof}
  The idea is to count good pivots and irreducible morphisms
  between projective modules. Let $x \in \bt_n$. As the Hasse diagram
  of the Tamari poset is regular of degree $n-1$, there are $n-1$
  edges incident to $x$ in this Hasse diagram. But the Hasse diagram
  also describes irreducible morphisms between projective modules, hence
  there are exactly $n-1$ irreducible morphisms from (or to) $\proj(x)$ to
  (or from) another projective module. By Lemma \ref{maps_are_pivots},
  each edge of the Hasse diagram is given by a good pivot between
  projective elements.

  By Lemma \ref{colored_angles}, there are $n-1$ good pivots from or
  to the projective element $P$. Hence each good pivot corresponds to
  an edge of the Hasse diagram and to an irreducible morphism between
  projective modules.
\end{proof}

From now on, a good pivot from $\ps(x)$ to $\ps(y)$ will (slightly abusing notation) represent also the unique morphism from $\proj(x)$ to $\proj(y)$ that is the identity when possible and $0$ else. From what precedes one gets the following Lemma.

\begin{lemma}
 \label{tout_commute}
 Every two sequences of good pivots between projective modules with common start and common end give the same morphism between projective modules.
\end{lemma}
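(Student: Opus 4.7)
The plan is to reduce the statement to a standard fact about morphisms between indecomposable projective modules over the incidence algebra of a finite poset.

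First I would translate the combinatorial data into order-theoretic data. By Lemma \ref{maps_are_pivots}, every good pivot $\ps(x_i) \to \ps(x_{i+1})$ corresponds to a cover relation $x_i \lessdot x_{i+1}$ in the Tamari order. Consequently, any sequence of good pivots from $\ps(a)$ to $\ps(b)$ unfolds as a saturated chain $a = x_0 \lessdot x_1 \lessdot \cdots \lessdot x_k = b$; in particular $a \leq x_i \leq b$ for every intermediate $i$.

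Next I would invoke the well-known description of morphism spaces out of a projective over a poset: for every $\bt_n$-module $M$, evaluation at the vertex $a$ yields a natural isomorphism $\operatorname{Hom}(\proj(a), M) \simeq M_a$. Applied to $M = \proj(b)$, this gives a $\QQ$-vector space of dimension at most one, which is one-dimensional exactly when $a \leq b$. Hence every morphism $\proj(a) \to \proj(b)$ is completely determined by its value on the generator at vertex $a$.

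By the convention introduced just before the statement, the morphism assigned to each good pivot $\ps(x_i) \to \ps(x_{i+1})$ is the identity at every vertex where both $\proj(x_i)$ and $\proj(x_{i+1})$ are nonzero, and in particular at the vertex $a$, since $a \leq x_i$ and $a \leq x_{i+1}$ along the chain above. Composing along either given sequence of good pivots therefore produces a morphism $\proj(a) \to \proj(b)$ whose restriction at vertex $a$ is the identity $\QQ \to \QQ$. By the previous paragraph the two composites must coincide as morphisms of $\bt_n$-modules.

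The main obstacle, such as it is, will be purely bookkeeping: one has to verify that the monotonicity of the chain keeps $a$ in the support of every intermediate $\proj(x_i)$, so that the word \emph{identity} really makes sense at each step. The genuine content of the lemma is therefore packaged in Lemma \ref{maps_are_pivots} together with the one-dimensionality of $\operatorname{Hom}(\proj(a), \proj(b))$.
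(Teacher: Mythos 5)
Your argument is correct and is essentially the explicit version of what the paper leaves implicit (it states the lemma follows ``from what precedes''): good pivots between projective elements are exactly Hasse-diagram edges, and since evaluation at the generating vertex gives $\operatorname{Hom}(\proj(a),M)\simeq M_a$, the space $\operatorname{Hom}(\proj(a),\proj(b))$ is at most one-dimensional, so two composites that both act as the identity at the vertex $a$ must coincide. One small citation slip: the fact that every good pivot between projective elements arises from a cover relation is the content of Lemma \ref{pivots_are_maps}, not Lemma \ref{maps_are_pivots} (which gives the converse direction).
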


\section{Other description of projective elements}

\begin{proposition}
  \label{based_proj}
  Any based projective element $P$ of degree at least $2$ can
  be written uniquely as $\gA \circ_1 Q$ for some projective element
  $Q$.
\end{proposition}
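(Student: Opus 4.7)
The plan is to establish the characterization that, for $x \in \bt_n$ with $n \geq 2$, the projective element $\ps(x)$ is based if and only if $x$ has the form $y \over \Y$ for some $y \in \bt_{n-1}$; the proposition then follows at once. The key identity underlying the whole argument is
\[
\gA \circ_1 \ps(y) = \ps(y) \over \Y = \ps(y \over \Y),
\]
where the first equality comes from the formula $P \over Q = (Q \circ_1 \gA) \circ_1 P$ (with $Q = \Y$) together with the operad unit axiom, and the second is Lemma \ref{proj_over} combined with $\ps(\Y) = \Y$.

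I would also invoke the general inheritance principle that, in any composition $P \circ_i Q$ of noncrossing plants with $i \geq 1$, the base side of the result coincides with the base side of the outer operand $P$. One direction of the characterization is then immediate: if $x = y \over \Y$, then $\ps(x) = (\Y \circ_1 \gA) \circ_1 \ps(y)$ has outer operand $\Y$, and $\Y$ is based in $O_1$. For the converse I would induct on $n$. By Lemma \ref{basic_alt}, $x$ is either $a \over b$ or $\Y \under c$. In the second case Lemma \ref{proj_under} gives $\ps(x) = \mA \circ_2 \ps(c)$, whose base side is inherited from $\mA$; here the crucial combinatorial observation is that $\mA$ does not contain the base side of $O_2$ (as a noncrossing tree of degree $2$, $\mA$ is the apex triangle, whose sole angle lies opposite the base), so this composition is never based. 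Thus when $\ps(x)$ is based we must be in the case $x = a \over b$, and Lemma \ref{proj_over} rewrites $\ps(x) = (\ps(b) \circ_1 \gA) \circ_1 \ps(a)$; the inheritance principle now forces $\ps(b)$ to be based. If $b = \Y$ then $x = a \over \Y$ is already of the required form with $y = a$; otherwise $\deg b \geq 2$ and the induction hypothesis gives $b = y_b \over \Y$, so that associativity of $\over$ rewrites $x$ as $(a \over y_b) \over \Y$, as desired.

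With the characterization in hand the proposition reads off at once: the assignment $Q = \ps(y) \mapsto \gA \circ_1 Q = \ps(y \over \Y)$ sends projective elements of degree $n-1$ surjectively onto the based projective elements of degree $n$, and it is injective because $y$ is recovered from $y \over \Y$ as the left subtree of the root; this yields both existence and uniqueness. The main obstacle is the single combinatorial input that $\mA$ is not based in $O_2$; once this is granted, everything else is a routine manipulation of the operad identities gathered in Lemmas \ref{basic_alt}, \ref{proj_over} and \ref{proj_under}.
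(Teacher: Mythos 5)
Your proof is correct and follows essentially the same route as the paper: an induction on the degree driven by the dichotomy of Lemma \ref{basic_alt} (equivalently Prop.~\ref{proj_alt}), ruling out the case $\mA \circ_2 R$ because $\mA$ does not contain the base side, and recursing on the necessarily based right factor of the over product --- your explicit characterization of based projective elements as the $\ps(y \over \Y)$ is just a repackaging of the paper's argument via Lemma \ref{proj_over}. One cosmetic slip: in $(\Y \circ_1 \gA)\circ_1 \ps(y) = \gA \circ_1 \ps(y)$ the outer operand whose basedness is inherited is $\gA$ (which does contain the base side of $O_2$, the gluing edge being side $1$), not $\Y$; this does not affect the argument.
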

\begin{proof}
  By induction on the degree $n$. This is clearly true for the based projective
  element $\gA$. Let us use Prop. \ref{proj_alt}.

  As $P$ is based, it cannot be written $\mA \circ_2 R$ for some
  projective element $R$. Therefore it can be written $ R'\over R$ for
  some projective elements $R$ and $R'$. Necessarily $R$ is based. By
  induction hypothesis, $R$ can be written $\gA \circ_1 R''$ for some
  projective element $R''$. Then $Q= R' \over R''$ is a
  projective element by Prop. \ref{proj_1} and one has $P= \gA \circ_1 Q$.

  Uniqueness is obvious, as there is clearly at most one noncrossing
  tree $Q$ satisfying the hypothesis.
\end{proof}

\begin{proposition}
  \label{proj_as_list}
  Any projective element $P$ has a unique decomposition as a $*$
  product of based projective elements.
\end{proposition}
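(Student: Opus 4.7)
The plan is to settle uniqueness immediately from earlier results and to prove existence by induction on the degree, splitting according to Proposition \ref{proj_alt}.

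Uniqueness is essentially free: by Theorem \ref{projectives_are_nct}, a projective element is in particular a noncrossing tree, and Lemma \ref{decompo_nct} already furnishes its unique factorization as a $*$-product of based noncrossing trees. Any decomposition of $P$ into based projective elements must therefore coincide with that one.

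For existence I induct on the degree $n$ of $P$. The base case $n=1$ holds because $\ps(\Y)=\Y$ is itself based. For $n\geq 2$, Proposition \ref{proj_alt} provides two mutually exclusive cases. If $P=\mA\circ_2 Q$ with $Q$ projective, Lemma \ref{proj_under} rewrites $P=\Y*Q$, and the induction hypothesis applied to $Q$ yields $Q=Q_1*\cdots*Q_l$ with each $Q_i$ based projective; since $\Y$ is itself a based projective element, $P=\Y*Q_1*\cdots*Q_l$ is of the required form.

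If instead $P=Q/R$ with $Q,R$ projective, I apply the induction hypothesis to $R$, obtaining $R=R_1*\cdots*R_l$ with each $R_i$ based projective. Relation (\ref{star_over}) then gives
\[
P = Q/(R_1*R_2*\cdots*R_l) = (Q/R_1)*R_2*\cdots*R_l.
\]
Proposition \ref{proj_1} ensures that $Q/R_1$ is projective, and the only step that is not purely formal, and the one I expect to be the main obstacle, is verifying that $Q/R_1$ is based. I would argue this geometrically from the formula $Q/R_1=(R_1\circ_1\gA)\circ_1 Q$ given by (\ref{def_over}), together with the gluing rule defining the composition of noncrossing plants: both $\circ_1$ operations act at side $1$ of the outer polygon and therefore leave its base side undisturbed, so the base side of $Q/R_1$ coincides with the base side of $R_1$. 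Since $R_1$ is based, this edge belongs to $R_1$ and hence survives into $Q/R_1$, showing that $Q/R_1$ is indeed a based projective element.
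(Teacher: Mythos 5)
Your proof is correct and follows essentially the same route as the paper: uniqueness via Lemma \ref{decompo_nct}, existence by induction using Proposition \ref{proj_alt}, with the cases $P=\mA\circ_2 Q$ (giving $\Y*Q$) and $P=Q/R$ (giving $(Q/R_1)*R_2*\cdots*R_k$ via (\ref{star_over}) and Proposition \ref{proj_1})). The only difference is that the paper treats "$P$ based" as a separate trivial case and simply asserts that $Q/R_1$ is based, whereas you supply the (correct) geometric justification for that last point.
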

\begin{proof}
  First, let us note that each noncrossing tree can be uniquely
  written as a product of based noncrossing trees by Lemma
  \ref{decompo_nct}. Therefore uniqueness in the assertion is clear.
  It remains only to prove that all factors are projective elements,
  by induction on the degree of $P$.

  If $P$ is based, then it is its own decomposition.

  If $P$ is not based, let us use Prop. \ref{proj_alt}.

  If $P$ can be written $\mA \circ_2 Q$ for some projective $Q$, then
  one has $P=\Y * Q$. Using the induction hypothesis for $Q$,
  one gets the result for $P$.

  Else $P$ can be written $Q\over R$ for some projective elements $Q$
  and $R$. By induction hypothesis, $R=R_1 * R_2 * \dots * R_k$ where
  $R_i$ are some based projective elements. Then $P=(Q \over R_1) *
  R_2 * \dots * R_k$ by (\ref{star_over}). By Prop. \ref{proj_1}, the
  first factor $Q\over R_1$ is projective. It is also based, hence one
  has obtained the wanted decomposition for $P$.
\end{proof}

\begin{proposition}
  \label{based_do}
  If $P$ is a projective element, then $\gA \circ_1 P$ is a based
  projective element.
\end{proposition}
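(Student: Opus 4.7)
The plan is to reduce the statement to Lemma \ref{proj_over} via the identity $\gA \circ_1 P = P/\Y$. Indeed, applying formula (\ref{def_over}) with $y = \Y$ gives $P/\Y = (\Y \circ_1 \gA) \circ_1 P = \gA \circ_1 P$, since $\Y$ is the unit of $\dend$. Writing $P = \ps(x)$ for some $x \in \bt$, Lemma \ref{proj_over} then yields
\[
\gA \circ_1 P = \ps(x)/\ps(\Y) = \ps(x/\Y),
\]
so $\gA \circ_1 P$ is automatically a projective element, regardless of whether $P$ is itself based.

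To show that this projective element is based, the cleanest route is a geometric argument using the combinatorics of noncrossing plants. The generator $\gA$ contains the base side of $O_2$, and the composition $\gA \circ_1 P$ glues $P$ along side $1$ of $\gA$, leaving the base side of $\gA$ intact, so that it becomes the base side of the resulting polygon $O_{n+1}$. (The fact that $\gA \circ_1 P$ is a genuine noncrossing tree, rather than a plant with numerator edges, follows from Lemma \ref{pas_plante}, since $\gA$ contains side $1$.) Alternatively one can argue purely algebraically: the tree $x/\Y$ has its left subtree equal to $x$, so for $|x| \geq 1$ it has at least one internal vertex strictly to the left of the root; by Lemma \ref{basic_alt} it falls in the first case ($a/b$) of the dichotomy rather than the second ($\Y \backslash z$), which via Prop \ref{proj_alt} corresponds to the $Q/R$ (based) form of projective elements.

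The main subtle point is the equivalence between ``being based'' and ``being of the form $Q/R$ in Prop \ref{proj_alt}''. The $\Rightarrow$ direction is the one used in the proof of Prop \ref{based_proj}; the $\Leftarrow$ direction needed here is precisely what the geometric argument above makes transparent, so a slight expansion of that argument is all that is needed to make the proof self-contained.
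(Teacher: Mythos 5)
Your main line is exactly the paper's proof: the identity $\gA \circ_1 P = P/\Y$ (from (\ref{def_over}) and unitality of $\Y$) reduces everything to Lemma \ref{proj_over} / Prop.~\ref{proj_1}, and basedness is immediate because the base side of the triangle carrying $\gA$ is untouched by composition at side $1$. That argument is correct and complete (the paper's own proof is the one-liner ``$\gA\circ_1 P = P/\Y$ plus Prop.~\ref{proj_1}'', leaving basedness implicit).

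One warning about your closing paragraph: the ``equivalence between being based and being of the form $Q/R$ in Prop.~\ref{proj_alt}'' is false in the direction you claim to need. Only the implication based $\Rightarrow$ of the form $Q/R$ holds (that is what Prop.~\ref{based_proj} uses); the converse fails, since $Q/R$ is based exactly when $R$ is based. For instance $\ps(\Y/\rgt)=\Y/\mA=\mA\circ_1\gA$ is a projective element of the form $Q/R$ which does not contain the base side. Consequently your ``alternative algebraic argument'' via Lemma \ref{basic_alt} does not work as stated: landing in the first case of the dichotomy does not imply basedness. What saves you here is that the second factor is $\Y$, which is based, so $P/\Y$ is based --- and this is precisely what your geometric argument (the base side of $\gA$ survives the gluing) establishes. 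Keep the geometric justification, drop the claimed equivalence.
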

\begin{proof}
  This follows from Prop. \ref{proj_1}, because $\gA \circ_1 P=P
  \over \Y$.
\end{proof}

\begin{proposition}
  \label{product_do}
  If $P$ and $Q$ are projective elements, then $P*Q$ is a projective element.
\end{proposition}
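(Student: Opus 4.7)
The plan is induction on the degree of $P$, treating the projective element $Q$ as an arbitrary parameter throughout. The base case $\deg P = 1$ forces $P = \Y = \ps(\Y)$, and by (\ref{def_star}) together with the fact that $\Y$ is the operadic unit one has $P * Q = \Y * Q = \mA \circ_2 Q$, which is a projective element by Prop.\ \ref{proj_2}.

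For the inductive step, assume $\deg P \geq 2$. I would apply Prop.\ \ref{proj_alt} to split $P$ into one of two forms, each matching exactly one of the two preservation statements Prop.\ \ref{proj_1} and Prop.\ \ref{proj_2}.

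In the first subcase, $P = P_1 \over P_2$ with $P_1$ and $P_2$ projective elements of strictly smaller degree. Relation (\ref{star_over}) rewrites $P * Q = (P_1 \over P_2) * Q = P_1 \over (P_2 * Q)$; the induction hypothesis applied to $P_2$ shows $P_2 * Q$ is a projective element, and Prop.\ \ref{proj_1} then gives that $P * Q$ is projective.

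In the second subcase, $P = \mA \circ_2 R$ with $R$ a projective element of strictly smaller degree. As in the base case $\mA \circ_2 R = \Y * R$, so by associativity of $*$ one obtains $P * Q = (\Y * R) * Q = \Y * (R * Q) = \mA \circ_2 (R * Q)$. The induction hypothesis yields that $R * Q$ is projective, and a final appeal to Prop.\ \ref{proj_2} closes this case. I do not anticipate any genuine obstacle here: the two branches of Prop.\ \ref{proj_alt} line up precisely with Props.\ \ref{proj_1} and \ref{proj_2}, and the identity (\ref{star_over}) together with associativity of $*$ is exactly what is needed to push $Q$ inside the recursive decomposition of $P$. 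The only point worth a second look is the identity $\Y * R = \mA \circ_2 R$, which is immediate from (\ref{def_star}) with $x = \Y$ and $y = R$.
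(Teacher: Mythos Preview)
Your proof is correct, and in fact a bit more streamlined than the paper's own argument. Both proofs share the same base case ($P=\Y$ gives $P*Q=\mA\circ_2 Q$, projective by Prop.~\ref{proj_2}), but they diverge in the inductive step.

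The paper does a double induction on $\deg(P)+\deg(Q)$ and on $\deg(P)$, and splits according to whether $P$ is \emph{based} or not. In the based case it invokes Prop.~\ref{based_proj} to write $P=R\over\Y$, and in the non-based case it uses Prop.~\ref{proj_as_list} to factor $P=R*R'$; the second induction parameter is needed to handle this last factorisation, since $\deg R$ need not be smaller than $\deg P$ while $\deg(R)+\deg(R'*Q)$ is. You instead go back to the earlier Prop.~\ref{proj_alt} and split $P$ as $P_1\over P_2$ or $\mA\circ_2 R$; in both branches the relevant subfactor has strictly smaller degree, so a single induction on $\deg P$ suffices, and you only need the earlier Propositions~\ref{proj_1} and~\ref{proj_2} together with (\ref{star_over}) and associativity of~$*$. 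Your route avoids Prop.~\ref{based_proj} and Prop.~\ref{proj_as_list} entirely, which is a genuine simplification. The paper's choice is presumably motivated by wanting to exercise the based/non-based dichotomy that becomes essential later (e.g.\ in the proof of Prop.~\ref{diese_proj}), but for the present statement your argument is the cleaner one.
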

\begin{proof}
  By induction on $\deg(P)+\deg(Q)$ and $\deg(P)$.

  If $\deg(P)=1$, then $P=\Y$ and $P*Q=\mA \circ_2 Q$ is a
  projective element by Prop. \ref{proj_2}.

  Assume that $\deg(P)\geq 2$. If $P$ is based, then it can be written
  $R\over \Y$ for some projective element $R$, by Prop.
  \ref{based_proj}. Then $P*Q=R \over (\Y*Q)$ by
  (\ref{star_over}). The product $\Y*Q$ is a projective element
  by the initial step of induction. Hence $P*Q$ is a projective
  element by Prop. \ref{proj_1}.

  If $P$ is not based, then $P$ can be written $R*R'$ for some
  projective elements, by Prop. \ref{proj_as_list}. Then
  $P*Q=R*(R'*Q)$ and $R'*Q$ is a projective element by induction on
  the sum of degrees. Therefore $P*Q$ is a projective element by
  induction on the degree of the first factor.
\end{proof}

Let us now give a useful characterization of projective elements.



\begin{proposition}
  \label{proj_vanishing}
  A noncrossing tree $P$ is a projective element if and only if $N_{\dAs}(P)=0$.
\end{proposition}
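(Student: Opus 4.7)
The forward direction is Proposition \ref{colored_angles}. For the reverse direction, my plan is an induction on the degree $n$ of $P$; the base case $n=1$ is immediate since $\nct_1=\{\Y\}=\{\ps(\Y)\}$. The inductive step splits according to whether $P$ is based.

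If $P$ is not based, Lemma \ref{decompo_nct} yields a unique factorization $P=P_1 * \cdots * P_k$ with $k\geq 2$ and each $P_i$ a based noncrossing tree of strictly smaller degree. The key geometric observation is that the $n-1$ angles of $P$ decompose as the disjoint union of the internal angles of the $P_i$ together with one junction angle at each of the $k-1$ interior vertices of the base path of $P$. At such a junction vertex $u$, the triangle formed by $u$ and its two base-path neighbors has its triangle base equal to the chord joining those two neighbors (which lies between $u$ and the polygon base), and so $u$ is the apex of the triangle, giving type $\mA$. Consequently $N_{\dAs}(P)=\sum_i N_{\dAs}(P_i)$, and the vanishing on the left forces the vanishing of each summand. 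The induction hypothesis makes every $P_i$ projective, and Proposition \ref{product_do} concludes that $P$ is projective.

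If $P$ is based, let $L$ and $R$ be the endpoints of the base side. Any non-base edge incident to $R$ would, together with the base side, form an angle at $R$ whose triangle has the polygon base as its triangle base and $R$ at the right endpoint of that base; this is by definition of type $\dA$. Thus the hypothesis $N_{\dAs}(P)=0$ forces $R$ to be a leaf of $P$. Deleting $R$ and the base side then produces a noncrossing tree $Q$ on the sub-polygon whose vertices are the remaining polygon vertices, and one checks by direct inspection of edges that $P=\gA\circ_1 Q$ in the operad of noncrossing plants. The angles of $Q$ correspond bijectively to the angles of $P$ other than the single angle at $L$ involving the base side (which is of type $\gA$), and the triangle base of any triangle whose vertices lie in the sub-polygon is computed identically in the sub-polygon and in $O_n$, because both polygon bases sit on the same side of such a triangle. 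Hence $N_{\dAs}(Q)=0$, the induction hypothesis makes $Q$ projective, and Proposition \ref{based_do} shows that $P=\gA\circ_1 Q$ is a projective element.

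The main obstacle is the geometric bookkeeping around the ``closest to the base side'' convention that defines triangle bases: one must verify that junction angles in the non-based decomposition are always of type $\mA$ and that triangle types are preserved under passage to the sub-polygon in the based case. Both hinge on the fact that, in a convex polygon, the base path of a noncrossing tree always bulges away from the polygon base, which keeps all the relevant geometric configurations consistent.
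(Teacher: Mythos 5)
Your proof is correct and follows essentially the same route as the paper's: forward direction by Proposition \ref{colored_angles}, and for the converse an induction splitting on whether $P$ is based, writing $P=\gA\circ_1 Q$ (the hypothesis $N_{\dAs}(P)=0$ forcing the right base vertex to be a leaf) in the based case and invoking the $*$-factorization into based trees together with Proposition \ref{product_do} in the non-based case. You simply make explicit the geometric bookkeeping (junction angles being of type $\mA$, preservation of angle types in sub-polygons) that the paper leaves implicit.
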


\begin{proof}
  If $P$ is a projective element, then $N_{\dAs}(P)=0$ by Prop.
  \ref{colored_angles}.

  Assume now that $N_{\dAs}(P)=0$. The proof that $P$ is a projective element 
  uses induction on the degree $n$. This is clear if $n=1$, for $\gA$ and $\mA$.

  If $P$ is based, then it can be written $\gA \circ_1 Q$ for some
  noncrossing tree $Q$ (one uses the hypothesis $N_{\dAs}(P)=0$ to
  show that the right side of $P$ is empty). One then necessarily has
  $N_{\dAs}(Q)=0$. Hence $Q$ is a projective element by induction.
  Therefore $P$ is a projective element by Prop. \ref{based_do}.

  If $P$ is not based, it can be written as a product $R_1*\dots *R_k$
  for some based noncrossing trees $R_1,\dots,R_k$. Then one necessarily has $N_{\dAs}(R_i)=0$ for
  every $i$. Therefore each $R_i$ is projective by induction. Hence
  $P$ is projective by Prop. \ref{product_do}.
\end{proof}

\section{Composition of projective elements}

\begin{proposition}
  \label{circi_proj}
  Let $P,Q$ be projective elements. Assume that $P \circ_i Q$ is a
  noncrossing tree. Then $P \circ_i Q$ is a projective element.
\end{proposition}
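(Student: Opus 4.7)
My plan is to invoke Proposition \ref{proj_vanishing}: since $P \circ_i Q$ is already assumed to be a noncrossing tree, it suffices to show $N_{\dAs}(P \circ_i Q)=0$, using the hypotheses $N_{\dAs}(P)=N_{\dAs}(Q)=0$.

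First I would classify the angles of $P \circ_i Q$ by the location of their common vertex in $O_{m+n-1}$: angles whose triangle lies entirely in the image of $O_m$, those entirely in the image of $O_n$, and those at the two gluing vertices $\alpha,\beta$ (the endpoints of side $i$ of $O_m$, identified with the endpoints of the base of $O_n$). For angles of the first two kinds, the arc characterization of the triangle base---the base is the edge whose endpoints bound the arc of the polygon boundary containing the polygon's base---shows that the triangle base in $O_{m+n-1}$ coincides with the triangle base in $O_m$ (resp.\ $O_n$), so the type is preserved and no $\dA$-angle arises from these inherited angles.

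Next I would treat the gluing vertices. By Lemma \ref{pas_plante} the gluing edge is either kept in $P \circ_i Q$ (when $P$ contains side $i$ and $Q$ is based) or discarded (when exactly one of these holds); in the kept case every angle at $\alpha,\beta$ is inherited from $P$ or $Q$, possibly involving the gluing edge itself, with preserved type, so none is of type $\dA$ since $P,Q$ are projective. In the discarded case I would assume $P$ contains side $i$ and $Q$ is not based, the other sub-case being handled by an analogous argument. The hypothesis $N_{\dAs}(P)=0$ then restricts $P$'s tree at $\beta$: any tree edge $\beta v_j$ with $v_j\in\{v_0,\ldots,v_{i-2}\}$ would, via the arc analysis of the triangle $\alpha,\beta,v_j$, give a $P$-angle of type $\dA$, which is forbidden. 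Under this restriction at most one new ``hybrid'' angle appears at each of $\alpha,\beta$, between the last angularly-ordered $P$-tree edge and the first $Q$-tree edge across the gap left by the discarded gluing edge; a short case analysis on the position of the $P$-endpoint along the $O_m$-boundary identifies the triangle base in $O_{m+n-1}$ as either the $P$-edge itself (giving type $\gA$) or the diagonal crossing the gluing seam (giving type $\mA$), never $\dA$. The main obstacle is this arc-by-arc bookkeeping of triangle bases at the gluing vertices, which requires tracking the clockwise orientations of $O_m$, $O_n$ and $O_{m+n-1}$ consistently; once orientations are fixed, the case enumeration is routine. Combining everything, $N_{\dAs}(P \circ_i Q)=0$, and $P\circ_i Q$ is projective by Proposition \ref{proj_vanishing}.
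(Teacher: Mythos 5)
Your proposal is correct and follows essentially the same route as the paper: reduce to showing $N_{\dAs}(P\circ_i Q)=0$ and conclude by Proposition \ref{proj_vanishing}, after matching the angles of $P\circ_i Q$ with those of $P$ and $Q$ and checking that no angle of type $\dA$ is created at the gluing seam. The paper organizes the case split by whether $P$ contains the border side $i$ (rather than by whether the gluing edge is kept or discarded) and states the type-preservation of the angle bijection more tersely, but the substance of the argument is the same.
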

\begin{proof}
  One has to distinguish two cases.

  If $P$ contains the border side $i$, then the set of angles of $P \circ_i Q$
  is in bijection with the disjoint union of the set of angles of $P$ and the set of angles of $Q$. By this bijection, the type of each angle is preserved.

  If $P$ does not contain the border side $i$, then necessarily, by Prop. \ref{pas_plante}, $Q$ is based. In this case, there is a bijection between the set of angles of $P \circ_i Q$ and the disjoint union of the set of angles of $P$ and the set of angles of $Q$. By this bijection, the type of each angle is preserved, except maybe for one angle of type $\gA$ of $Q$ between the base side and the border side $1$ of $Q$, which may give an angle of type $\gA$ or $\mA$ in $P \circ_i Q$.

  In both cases, one therefore has $N_{\dAs}(P\circ_i Q)=0$. By Prop. \ref{proj_vanishing}, one gets the result.
\end{proof}

\begin{corollary}
  \label{circ1_proj}
  Let $P,Q$ be projective elements. Then $P \circ_1 Q$ is a projective
  element.
\end{corollary}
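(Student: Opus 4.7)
The plan is essentially to reduce the corollary to Proposition \ref{circi_proj}, so the only thing left to check is that $P \circ_1 Q$ is a noncrossing tree (and not a noncrossing plant with a numerator edge). Once that is established, Proposition \ref{circi_proj} applied with $i = 1$ immediately gives the conclusion.

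First I would invoke Theorem \ref{projectives_are_nct}: since $P$ is a projective element, it is a noncrossing tree that contains the border side $1$. This is the crucial input, as it tells us that the hypothesis of Lemma \ref{pas_plante} in the second alternative ($P$ contains the border side $i$) is automatically satisfied when $i = 1$, regardless of $Q$.

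Next I would apply Lemma \ref{pas_plante} with $i = 1$: because $P$ contains the border side $1$, the composition $P \circ_1 Q$ is a noncrossing tree (no numerator edge is created during the gluing). Note that we do not need $Q$ to be based here; the side-$1$ condition on $P$ is what rescues us.

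Finally, with both $P$ and $Q$ projective elements and $P \circ_1 Q$ known to be a noncrossing tree, Proposition \ref{circi_proj} yields directly that $P \circ_1 Q$ is a projective element. There is no real obstacle: the corollary is essentially a packaging of Theorem \ref{projectives_are_nct} (projective elements touch side $1$), Lemma \ref{pas_plante} (so no numerator edge arises at $i=1$), and Proposition \ref{circi_proj} (the general statement).
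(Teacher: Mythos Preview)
Your proof is correct and follows exactly the paper's approach: the paper's one-line proof cites Proposition~\ref{circi_proj} and Theorem~\ref{projectives_are_nct}, and you have simply made explicit the intermediate step (Lemma~\ref{pas_plante}) that turns ``$P$ contains the border side $1$'' into ``$P\circ_1 Q$ is a noncrossing tree.''
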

\begin{proof}
  This follows from Proposition \ref{circi_proj} and Th.
  \ref{projectives_are_nct}.
\end{proof}

\section{The composition functor $\circ_1$}

Consider the following subset
\begin{equation}
  \ms_{m,1}^n =  \{ (x,y,z) \in \bt_m \times \bt_n \times \bt_{m+n-1} \mid z \in \ps(x) \circ_1 \ps(y)\}.
\end{equation}

Let us now define a module $\tm_{m,1}^n$ over the quiver $\bt_m^{op} \times \bt_n^{op} \times \bt_{m+n-1}$ with all possible commuting relations.

The module $\tm_{m,1}^n$ is given on vertices by a copy of $\QQ$ at each element of $\ms_{m,1}^n$ and the null vector space elsewhere. On the level of maps, it is given by the $\Id$ map whenever possible and the $0$ map else.

One then has to check that relations are satisfied.

\begin{proposition}
  The module $\tm_{m,1}^n$ is a module over the quiver $\bt_m^{op} \times \bt_n^{op} \times \bt_{m+n-1}$ with all commuting relations.
\end{proposition}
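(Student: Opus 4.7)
The plan is to reduce the claim to a single monotonicity statement, after which everything becomes routine. By Corollary~\ref{circ1_proj}, each composition $\ps(x) \circ_1 \ps(y)$ is itself a projective element, hence of the form $\ps(w(x,y))$ for a unique element $w(x,y) \in \bt_{m+n-1}$. Consequently
\[
\ms_{m,1}^n = \{(x,y,z) \in \bt_m \times \bt_n \times \bt_{m+n-1} \mid z \leq w(x,y)\},
\]
so that the fibre over each $(x,y)$ is the order ideal $\{z : z \leq w(x,y)\}$ of $\bt_{m+n-1}$.

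The central step is to establish the monotonicity of $w$: if $x \leq x'$ in $\bt_m$ and $y \leq y'$ in $\bt_n$, then $w(x,y) \leq w(x',y')$ in $\bt_{m+n-1}$. To see this, I would write $\ps(x') = \ps(x) + \Delta_x$ and $\ps(y') = \ps(y) + \Delta_y$, where $\Delta_x$ and $\Delta_y$ are non-negative integer combinations of trees, and expand $\ps(x') \circ_1 \ps(y')$ bilinearly. Since the dendriform composition of basis trees is a non-negative integer combination of trees (it is given by shuffles), this shows that $\ps(w(x',y'))$ dominates $\ps(w(x,y))$ coefficientwise. As both have coefficients in $\{0,1\}$ on the tree basis, their supports are nested, which forces $w(x,y) \leq w(x',y')$.

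With this monotonicity in hand, the commuting relations follow. An arrow of the product quiver either increases $x$ (in $\bt_m$), increases $y$ (in $\bt_n$), or decreases $z$ (in $\bt_{m+n-1}$), so a path from $(x,y,z)$ to $(x'',y'',z'')$ visits vertices $(x_1,y_1,z_1)$ with $x \leq x_1 \leq x''$, $y \leq y_1 \leq y''$, and $z \geq z_1 \geq z''$. If both endpoints lie in $\ms_{m,1}^n$, then
\[
z_1 \leq z \leq w(x,y) \leq w(x_1,y_1),
\]
so $(x_1,y_1,z_1) \in \ms_{m,1}^n$ as well, and the whole path stays in the support. Every composition along such a path is therefore the identity, and any two parallel paths with both endpoints in the support give the same morphism. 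Paths passing through a vertex outside the support compose to zero, so parallel paths with at least one endpoint outside the support also agree.

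The main obstacle is the monotonicity of $w(x,y)$; once this is established, the rest follows formally from the product structure of the quiver together with the hereditary nature of the support.
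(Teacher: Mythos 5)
Your proof is correct, but it takes a genuinely different route from the paper. The paper verifies the three partial module structures one at a time: it uses Corollary~\ref{circ1_proj} to see that each $(x,y)$-component is a projective $\bt_{m+n-1}$-module, then realizes the arrows of $\bt_m^{op}$ and $\bt_n^{op}$ as sequences of good pivots between the noncrossing trees $\ps(x)\circ_1\ps(y)$ (via Lemma~\ref{maps_are_pivots} and the graphical description of $\circ_1$), and invokes Lemma~\ref{tout_commute} to get commutativity. You instead write $\ps(x)\circ_1\ps(y)=\ps(w(x,y))$ and reduce everything to the single monotonicity statement $w(x,y)\leq w(x',y')$ for $x\leq x'$, $y\leq y'$, which you prove by a positivity argument: $\ps(x')-\ps(x)\geq 0$, the structure constants of $\circ_1$ on the tree basis are non-negative, and two $\{0,1\}$-vectors that dominate one another coefficientwise have nested supports. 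The ensuing observation that the support is closed under following arrows forward from any support vertex then makes all parallel-path checks formal. Your argument is more elementary and self-contained — it bypasses the pivot machinery entirely — though it does import the non-negativity of the dendriform composition on the tree basis, which the paper only alludes to (``double shuffle'', ``good positivity properties'') rather than proves; that fact is standard and true. The paper's route, by contrast, identifies the connecting maps explicitly with good pivots, i.e.\ with irreducible morphisms between projectives, which is information reused later for the $*$ and $\#$ functors. Note that your positivity method would also apply verbatim to those functors, since $*$ and $\#$ likewise have non-negative structure constants and preserve projective elements.
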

\begin{proof}
First, let us show that $\tm_{m,1}^n$ is a $\bt_{m+n-1}$-module.

Indeed, it decomposes (when restricted to the arrows coming from $\bt_{m+n-1}$) as a direct sum over $x$ and $y$, where the component associated with $x,y$ has support $\ps(x) \circ_1 \ps(y)$. By Corollary \ref{circ1_proj}, each such component is a projective $\bt_{m+n-1}$-module. This proves that $\tm_{m,1}^n$ is a projective $\bt_{m+n-1}$-module.

Let us then prove that $\tm_{m,1}^n$ is a $\bt_m^{op} \times \bt_{m+n-1}$-module. 

Let $x \to x'$ be an arrow in the Hasse diagram of $\bt_m$. By Lemma
\ref{maps_are_pivots}, there is a good pivot $\ps(x')\to \ps(x)$. It
follows from the graphical definition of $\circ_1$ on noncrossing
trees that there exists a sequence of good pivots starting from
$\ps(x')\circ_1 \ps(y)$ and ending with $\ps(x)\circ_1
\ps(y)$. Therefore, by Proposition \ref{pivots_are_maps}, there is a
morphism of $\bt_{m+n-1}$-module between the corresponding projective
$\bt_{m+n-1}$-modules.

Furthermore, for any $x,x'\in\bt_m$, any two sequences of good pivots from $\ps(x')\circ_1 \ps(y)$ to $\ps(x)\circ_1 \ps(y)$ give the same map between projective $\bt_{m+n-1}$-modules, by Lemma \ref{tout_commute}.

This implies that $\tm_{m,1}^n$ is a $\bt_m^{op} \times \bt_{m+n-1}$-module.

Let us now prove similarly that $\tm_{m,1}^n$ is a $\bt_n^{op} \times \bt_{m+n-1}$
module.

Let $y \to y'$ be an arrow in the Hasse diagram of $\bt_n$. By Lemma \ref{maps_are_pivots}, there is a good pivot $\ps(y')\to \ps(y)$. It follows from the graphical definition of $\circ_1$ on noncrossing trees that there exists a good pivot from $\ps(x)\circ_1 \ps(y')\to \ps(x)\circ_1 \ps(y)$. Therefore there is a morphism of $\bt_{m+n-1}$-module between the corresponding projective $\bt_{m+n-1}$-modules.

Furthermore, for any $y,y'\in\bt_n$, any two sequences of good pivots from $\ps(x)\circ_1 \ps(y')$ to $\ps(x)\circ_1 \ps(y)$ give the same map between projective $\bt_{m+n-1}$-modules, by Lemma \ref{tout_commute}.

This implies that $\tm_{m,1}^n$ is a $\bt_n^{op} \times \bt_{m+n-1}$-module.

It remains only to prove that $\tm_{m,1}^n$ is a $\bt_n^{op} \times \bt_{m}^{op}$-module. This is again a consequence of Lemma \ref{tout_commute}.

\end{proof}

One can therefore define a composition functor $\circ_1$ from the
category of $\bt_m \times \bt_n$ modules to the category of
$\bt_{m+n-1}$ modules as the tensor product with the module
$\tm_{m,1}^n$.

By definition, the functor $\circ_1$ induces, at the level of the Grothendieck group of the Tamari posets, the composition $\circ_1$.

One consequence is the following.

\begin{proposition}
 \label{circ1_functor}
 The map $\circ_1$ preserves the Euler form of the Tamari posets:
\begin{equation}
 E( x\circ_1 y)=E(x)E(y).
\end{equation}
\end{proposition}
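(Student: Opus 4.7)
The plan is to work in the basis of projective elements, where the Euler form takes the particularly simple form $\langle \ps(a), \ps(a') \rangle = [a \leq a']$. Indeed, higher Ext groups between projective modules vanish, so this pairing reduces to $\dim \operatorname{Hom}(\proj(a), \proj(a'))$, which for the incidence algebra of a poset equals $1$ if $a \leq a'$ and $0$ otherwise. Since the projective elements form a $\ZZ$-basis of $\dend(n)$ (the change from the simple basis is unitriangular with respect to the Tamari order), we prove instead the stronger four-linear identity
\[
\langle x_1 \circ_1 y_1,\; x_2 \circ_1 y_2 \rangle = \langle x_1, x_2 \rangle \langle y_1, y_2 \rangle
\]
for all $x_1,x_2 \in \dend(m)$ and $y_1,y_2 \in \dend(n)$; setting $x_1 = x_2 = x$ and $y_1 = y_2 = y$ then recovers $E(x \circ_1 y) = E(x) E(y)$. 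By four-linearity it is enough to verify this on quadruples of projective basis elements $x_i = \ps(a_i)$, $y_i = \ps(b_i)$.

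By Corollary \ref{circ1_proj}, $\ps(a) \circ_1 \ps(b)$ is itself a projective element, so equals $\ps(c(a,b))$ for a uniquely determined tree $c(a,b) \in \bt_{m+n-1}$. Both sides of the identity above then become indicator functions, and the proposition reduces to showing that the map $c : \bt_m \times \bt_n \to \bt_{m+n-1}$ is an order embedding for the product order:
\[
c(a,b) \leq c(a',b') \Leftrightarrow \bigl(a \leq a' \text{ and } b \leq b'\bigr).
\]

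The direction $\Leftarrow$ is built into the construction of $\tm_{m,1}^n$: each cover between $a$ and $a'$ in $\bt_m$ lifts, via the graphical analysis of $\circ_1$ used in the preceding proof, to a sequence of good pivots from $\ps(a) \circ_1 \ps(b)$ to $\ps(a') \circ_1 \ps(b)$ in $\bt_{m+n-1}$; concatenating the analogous chains for $\bt_m$ and $\bt_n$ and applying Proposition \ref{pivots_are_maps}, one obtains a nonzero morphism $\proj(c(a,b)) \to \proj(c(a',b'))$, hence $c(a,b) \leq c(a',b')$ whenever $a \leq a'$ and $b \leq b'$. The main obstacle is the converse direction: a Tamari relation $c(a,b) \leq c(a',b')$ must force both $a \leq a'$ and $b \leq b'$. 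Equivalently, the functor $\tm_{m,1}^n \otimes -$ must be fully faithful on projective $\bt_m \otimes \bt_n$-modules, which combinatorially amounts to showing that every Tamari cover inside the image $c(\bt_m \times \bt_n)$ originates from a cover in exactly one of the two factors; this can be verified by a direct analysis of how pivots in the glued polygon $O_{m+n-1}$ restrict to the two sub-polygons corresponding to the factors.
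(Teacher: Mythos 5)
You have correctly unpacked what the paper compresses into a single sentence: the paper merely invokes the existence of the functor $\circ_1$ together with the categorical meaning of the Euler form, while you make explicit that this amounts to checking the form on the projective basis, where $\langle \ps(a),\ps(a')\rangle$ is an indicator of comparability (with the paper's conventions one actually gets $[a'\leq a]$ rather than $[a\leq a']$, but this is immaterial). The reduction to the four-linear identity and then, via Corollary \ref{circ1_proj}, to the claim that $(a,b)\mapsto c(a,b)$ is an order embedding of $\bt_m\times\bt_n$ into $\bt_{m+n-1}$ is correct, and it is exactly the content hidden behind the paper's word ``automatic''. Your treatment of the easy direction via chains of good pivots and Lemma \ref{pivots_are_maps} is also fine.

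The proof is nevertheless incomplete at precisely the point you yourself call ``the main obstacle''. The implication $c(a,b)\leq c(a',b')\Rightarrow a\leq a'$ and $b\leq b'$ is announced as verifiable ``by a direct analysis'' but no analysis is given, and it is the only nontrivial part of the statement: without it you cannot exclude that two pairs incomparable in the product order (say $a<a'$ but $b'<b$) have comparable images, which would already falsify $E(x\circ_1 y)=E(x)E(y)$ for $x=\ps(a)+\ps(a')$ and $y=\ps(b)+\ps(b')$. The tools for closing this gap are available in the paper: $c(a,b)$ has degree $m+n-1$, so by regularity of the Hasse diagram and Lemma \ref{pivots_are_maps} it admits exactly $m+n-2$ good pivots to or from it, which equals the number of Hasse neighbours of $(a,b)$ in $\bt_m\times\bt_n$; combined with the lifting of covers you already use (and with the injectivity of $c$, needed to see that distinct covers of $(a,b)$ yield distinct covers of $c(a,b)$), this shows that every Hasse edge at $c(a,b)$ comes from a Hasse edge at $(a,b)$ in exactly one factor, hence that saturated chains between elements of the image of $c$ stay in the image and project to the factors. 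That argument must actually be carried out; as written, your proposal states precisely what has to be proved but does not prove it.
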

\begin{proof}
  This is an automatic consequence of the existence of the functor
  $\circ_1$, as the Euler form has a natural categorical interpretation.
\end{proof}

\subsection{Other composition functors}

It would be desirable to define the other composition functors
$\circ_i$, for $i>1$. So far, we have not been able to do that directly
as the tensor product with a tri-module. The point is that the
composition maps $\circ_i$ do not preserve the set of projective
elements, unless $i=1$. This makes more difficult to prove the
existence of the necessary tri-module.

There is one indirect way, though, to define these functors.  This
requires first to dispose of an invertible functor which categorifies
$\theta$. Such a functor is given by the
Auslander-Reiten translation on the derived category of the category
of $\bt_n$-modules. From the relation between $\tau$ and $\theta$, one
can then define a functor which categorifies $\tau$. One can use the
axiom (\ref{anticyclic1}) of an anticyclic operad as a model, to
define functors that categorify $\circ_i$.

This gives a possible definition of functors $\circ_i$ between derived
categories. It would be much better to define them at the level of
categories of modules, as the $\circ_i$ maps are known to have good
positivity properties.

In any case, it is enough to have found a functor that categorify the
$\circ_i$ product, to obtain the following result.
\begin{proposition}
 For any $i$, the map $\circ_i$ preserves the Euler form of the Tamari posets:
\begin{equation}
  E( x\circ_i y)=E(x)E(y).
\end{equation}
\end{proposition}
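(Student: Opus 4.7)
The plan is to reduce all composition maps $\circ_i$ to $\circ_1$ by exploiting the anticyclic structure, and then to combine Proposition \ref{circ1_functor} with the $\tau$-invariance of the Euler form.

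First, I would establish that $E(\tau(x)) = E(x)$. Since $\theta$ arises from the Auslander--Reiten translation on the derived category of $\mod \bt_m$, it is induced by a triangulated auto-equivalence; any such auto-equivalence preserves $\mathrm{Ext}$ groups in every degree, hence also their alternating sum, so $\langle \theta(x), \theta(y) \rangle = \langle x, y \rangle$ and in particular $E(\theta(x)) = E(x)$. Combined with $\tau = (-1)^m \theta^2$ and the quadratic identity $E(-z) = E(z)$, this gives $E(\tau(x)) = E(x)$; using $\tau^{m+1} = \Id$, the invariance extends to all powers of $\tau$.

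Next, I would iterate the anticyclic axiom (\ref{anticyclic1}) to shift the composition index. A direct induction on $j$ shows that, for $x \in \dend(m)$, $y \in \dend(n)$ and $0 \leq j \leq m-1$,
\begin{equation*}
\tau^{j}(x \circ_1 y) = \tau^{j}(x) \circ_{j+1} y,
\end{equation*}
where at each inductive step the hypothesis $j < m$ required by (\ref{anticyclic1}) is available. Substituting $\tau^{-(i-1)}(x)$ for $x$ and setting $j = i-1$ yields
\begin{equation*}
x \circ_i y = \tau^{i-1}\bigl( \tau^{-(i-1)}(x) \circ_1 y\bigr) \qquad \text{for } 1 \leq i \leq m.
\end{equation*}

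Applying $E$, I would conclude
\begin{equation*}
E(x \circ_i y) = E\bigl(\tau^{-(i-1)}(x) \circ_1 y\bigr) = E\bigl(\tau^{-(i-1)}(x)\bigr)\, E(y) = E(x)\, E(y),
\end{equation*}
where the outer equalities use the $\tau$-invariance of $E$ and the middle one is Proposition \ref{circ1_functor}. The only delicate step is the $\tau$-invariance of $E$: the endomorphism $\theta$ has no realization as a functor on $\mod \bt_m$ itself, so justifying this step is precisely where one must pass to the derived category and use the Auslander--Reiten translation, as sketched in the preceding discussion. Once this categorical input is granted, the remainder is a short manipulation within the operad.
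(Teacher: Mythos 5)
Your argument is correct and coincides with the route the paper itself indicates immediately after the proposition, namely deducing the statement from Proposition \ref{circ1_functor} via the anticyclic axiom (\ref{anticyclic1}) together with the relation $\tau=(-1)^n\theta^2$ and the invariance of the Euler form under $\theta$ (coming from its realization as the Auslander--Reiten translation on the derived category). You have merely written out in detail the index-shifting induction and the $\tau$-invariance of $E$ that the paper leaves implicit, so there is nothing to object to.
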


This Proposition can also be deduced directly from
Prop. \ref{circ1_functor}, using the axioms of an anticyclic operad,
and the relation between $\tau$ and $\theta$.

\subsection{Categorification of the $*$ product}

By the same kind of argument as for $\circ_1$, using Prop. \ref{product_do} instead of Corollary \ref{circ1_proj}, one can define a functor $*$ from the
category of $\bt_m \times \bt_n$ modules to the category of $\bt_{m+n}$ modules, that is a categorification of the $*$ product.

This implies the following result.

\begin{proposition}
  The $*$ product respects the Euler form of the Tamari posets: one has $E(x*y)=E(x)*E(y)$.
\end{proposition}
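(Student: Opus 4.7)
The plan is to mirror the construction of the $\circ_1$ functor from the preceding subsection. Define
\begin{equation*}
  \ms_{m,*}^n = \{(x,y,z)\in\bt_m\times\bt_n\times\bt_{m+n} \mid z\in\ps(x)*\ps(y)\}
\end{equation*}
and let $\tm_{m,*}^n$ be the candidate module on the quiver $\bt_m^{op}\times\bt_n^{op}\times\bt_{m+n}$ with all commuting relations, given by a copy of $\QQ$ at each point of $\ms_{m,*}^n$, the null vector space elsewhere, and identity maps whenever possible. The task then splits into three parts: verify that $\tm_{m,*}^n$ really is a module of this quiver, deduce that the associated tensor functor categorifies the $*$ product, and extract the Euler form identity.

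For the module check, restrict first to arrows coming from $\bt_{m+n}$. As in the $\circ_1$ case, the module decomposes as a direct sum indexed by pairs $(x,y)$, the summand at $(x,y)$ being supported on $\ps(x)*\ps(y)$. By Prop. \ref{product_do}, each support is a projective element of $\bt_{m+n}$, so each summand is a projective $\bt_{m+n}$-module, and in particular $\tm_{m,*}^n$ is projective as a $\bt_{m+n}$-module. For an arrow $x\to x'$ in $\bt_m$, Lemma \ref{maps_are_pivots} supplies a good pivot $\ps(x')\to\ps(x)$; via the gluing description of $*$ from formula (\ref{prod_ncp}) — the two noncrossing trees are attached to the left and right sides of a triangle $\mA$ — this pivot lifts to a sequence of good pivots from $\ps(x')*\ps(y)$ to $\ps(x)*\ps(y)$, hence to a morphism of projective $\bt_{m+n}$-modules by Lemma \ref{pivots_are_maps}. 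The symmetric argument handles arrows in $\bt_n$. Commutativity of all relations — within $\bt_m^{op}$, within $\bt_n^{op}$, and between the three factors of the quiver — is then a direct consequence of Lemma \ref{tout_commute}, applied to the lifted sequences of good pivots.

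The functor $*\colon \mod(\bt_m\times\bt_n)\to\mod(\bt_{m+n})$ is then defined as the tensor product with $\tm_{m,*}^n$, and it induces the $*$ product on Grothendieck groups by construction. The Euler form identity $E(x*y)=E(x)E(y)$ follows formally: the Euler form is the alternating sum of dimensions of $\operatorname{Ext}$ groups, which is multiplicative under tensor products over product categories. This is precisely the categorical argument used in the proof of Prop. \ref{circ1_functor}, and no new ingredient is needed at this step.

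The main obstacle is entirely combinatorial and concerns the lifting claim used in the module check: one must verify that a good pivot performed inside one of the two factors of $\ps(x)*\ps(y)$ remains a good pivot, or a composition of good pivots, after gluing to the triangle $\mA$. This should be straightforward because $\mA$ is attached along sides that do not carry the pivoted angles of the factors, but it is the one step that requires an explicit geometric check rather than a formal reference to the argument given for $\circ_1$.
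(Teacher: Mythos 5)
Your proposal follows exactly the route the paper intends: the paper defines the $*$ functor ``by the same kind of argument as for $\circ_1$, using Prop.~\ref{product_do} instead of Corollary~\ref{circ1_proj}'' and then derives the Euler form identity as an automatic consequence of the functor's existence, which is precisely what you spell out (including the pivot-lifting step you rightly flag as the only place needing a genuine check). Your write-up is essentially an expanded version of the paper's own sketch, with no substantive divergence.
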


\section{Planar binary trees as noncrossing trees}

Recall that each noncrossing tree is a sum of planar binary trees without multiplicity. Let us now characterize when this sum has only one term.

\begin{lemma}
  A noncrossing tree $P$ is a single planar binary tree if and only if
  $N_{\mAs}(P)=0$.  Moreover, this defines a bijection between simple
  noncrossing trees and planar binary trees.
\end{lemma}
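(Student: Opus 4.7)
The approach rests on the recalled fact that every noncrossing tree $P$ of degree $n$ maps, via the injective inclusion $\ncp_n \hookrightarrow \dend(n)$, to a sum $\sum_{T \in S(P)} T$ of planar binary trees without multiplicity, so $P$ is determined by the subset $S(P) \subseteq \bt_n$. It suffices to prove the equivalence $|S(P)| = 1 \iff N_{\mAs}(P) = 0$ and then derive the bijection between simple noncrossing trees and $\bt_n$.

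For the ``only if'' direction the plan is a contrapositive argument. Assume $N_{\mAs}(P) \geq 1$ and fix an $\mA$-angle of $P$. Since $\ncp$ is generated as an operad by $\gA, \mA, \dA$, I can write $P$ as an operadic composition featuring this $\mA$ at the corresponding slot. The relation $\mA = \gA + \dA$ in $\dend(2)$ combined with bilinearity of $\circ_i$ in $\dend$ then gives $P = P' + P''$ in $\dend$, where $P'$ and $P''$ are the two distinct noncrossing plants obtained from $P$ by replacing the chosen $\mA$ with $\gA$ or $\dA$. Injectivity of the inclusion forces $S(P') \cap S(P'') = \emptyset$, hence $|S(P)| = |S(P')| + |S(P'')| \geq 2$.

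For the ``if'' direction I would argue by induction on the degree $n$. Lemma \ref{decompo_nct} decomposes $P = P_1 * \cdots * P_k$; since the $*$-product is realized as $A*B = (\mA \circ_2 B) \circ_1 A$, each joining contributes one $\mA$-angle, so $N_{\mAs}(P)=0$ forces $k=1$, making $P$ based. For a based $P$ of degree $\geq 2$, the triangle of the angle at the leftmost base vertex has the polygon base as its own base side; acyclicity of $P$ forbids the third side of this triangle from lying in $P$, so the angle must be of type $\gA$. This yields a factorization $P = \gA \circ_1 Q$ with $N_{\mAs}(Q) = 0$ and $\deg Q < n$. By induction $Q$ is a single planar binary tree $T$, and the identity $\gA \circ_1 Q = Q/\Y$ (a consequence of (\ref{def_over}) and the unit property of $\Y$) identifies $P$ with the single planar binary tree $T/\Y$.

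For the bijection, the map sending a simple $P$ to the unique element of $S(P)$ is injective by the two preceding steps combined with injectivity of $\ncp \hookrightarrow \dend$. For surjectivity I would build preimages recursively using Lemma \ref{basic_alt}: if $T = x/y$, take $P(x)/P(y)$, which remains simple since the over-product introduces no $\mA$-angle; if $T = \Y \under x$, use (\ref{def_under}) to write $\Y \under x = \dA \circ_2 x$ in $\dend$ and set $P = \dA \circ_2 P(x)$, verifying the composition stays in $\nct$ and preserves the absence of $\mA$-angles. The main obstacles will be justifying cleanly the factorization $P = \gA \circ_1 Q$ for an arbitrary based noncrossing tree (beyond the projective setting of Prop.~\ref{based_proj}) and, symmetrically, verifying that the $\Y \under x$ construction does not inadvertently create an $\mA$-angle at the gluing.
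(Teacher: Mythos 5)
Your converse direction (``$N_{\mAs}(P)=0$ implies $P$ is a single tree'') contains a genuine error, not just a rough edge. You claim that every based noncrossing tree $P$ of degree at least $2$ carries an angle at the leftmost base vertex, necessarily of type $\gA$, and hence factors as $\gA \circ_1 Q$. This is false: the left base vertex of a based noncrossing tree may be incident to the base edge only, in which case there is no angle there at all. The smallest counterexample is $P=\dA$ itself, which is based, has $N_{\mAs}(\dA)=0$ and a single angle of type $\dA$, and is not of the form $\gA\circ_1 Q$; more generally every tree $\Y \under x = \dA \circ_2 x$ is based with its left base vertex a leaf. Your induction would therefore only ever produce trees of the form $T \over \Y$ and would miss, for instance, the right combs --- which is inconsistent with your own surjectivity step, where you correctly build $\dA \circ_2 P(x)$ as a simple based noncrossing tree not of the form $\gA\circ_1 Q$. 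The paper avoids this by using the full trichotomy for a based simple noncrossing tree, namely $Q\over\Y$, $\Y\under R$ or $Q\over\Y\under R$ (mirroring the decomposition of a planar binary tree at its root vertex), and runs the same induction on this trichotomy in both directions.

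Your forward direction also has a gap, subtler but real. The splitting argument ($\mA=\gA+\dA$, bilinearity of $\circ_i$, and disjointness of supports forced by multiplicity-freeness) is sound, but it rests on the unproved claim that a chosen $\mA$-angle of $P$ can be realized as an occurrence of the generator $\mA$ in some composition expression for $P$. The correspondence between angles and generator occurrences is not type-preserving under $\circ_i$: the paper's own proof of Prop.~\ref{circi_proj} records that an angle of type $\gA$ in $Q$ can become an angle of type $\mA$ in $P\circ_i Q$. So an $\mA$-angle of $P$ could a priori be produced by a composition of $\gA$'s and $\dA$'s alone; ruling this out is essentially equivalent to the implication you are trying to prove, and asserting it without argument is circular. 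The paper's proof of this direction is instead elementary and avoids the operadic bookkeeping entirely: by induction, each of $y\over\Y$, $\Y\under z$ and $y\over\Y\under z$ is a noncrossing tree with no $\mA$-angle whenever $y$ and $z$ are, because the new angles created at the root triangle are of types $\gA$ and $\dA$ only.
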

\begin{proof}
  By induction on the degree $n$. This is clearly true if $n=1$, for $\gA$ and $\dA$.

  If $x$ is a planar binary tree of degree $n\geq 2$, then $x$ can be
  written $y \over \Y$, $\Y\under z$ or $y \over \Y \under z$ for some
  smaller planar binary trees $y$ and $z$. By induction hypothesis, $y$ and
  $z$ are noncrossing trees with $N_{\mAs}(y)=N_{\mAs}(z)=0$.
  Therefore $x$ is also a noncrossing tree with $N_{\mAs}(x)=0$.

  Conversely, if $P$ is a noncrossing tree with $N_{\mAs}(P)=0$, then
  $P$ must be based by Lemma \ref{decompo_nct}. Therefore $P$ can be
  written $Q \over \Y$, $\Y \under R$ or $Q \over \Y
  \under R$ for some smaller noncrossing trees $Q$ and $R$. In this case, one
  necessarily has $N_{\mAs}(Q)=0$ and $N_{\mAs}(R)=0$. Therefore each of $Q$ and $R$ is a single planar binary tree by induction. Hence the same is true for $P$.
\end{proof}

Let us call a noncrossing tree $P$ satisfying $N_{\mAs}(P)=0$ a
\textbf{simple noncrossing tree}.

Note that pivots between simple noncrossing trees correspond to
elementary moves between planar binary trees, \textit{i.e.} edges in
the Hasse diagram of the Tamari poset.

\section{The $\#$ product and the $\#$ functor}

On the direct sum $\dend$ of all Abelian groups $\dend(n)$ for $n\geq
1$, there is an associative product $\#$, which can also be defined
using jeu-de-taquin on planar binary trees. The author has learned
about this product from Aval and Viennot, see
\cite{aval_viennot_preprint} and \cite{viennot_fpsac07} for the context
in which they consider the $\#$ product.

Unlike the product $*$, the product $\#$ is graded with respect to the
degree minus $1$, namely it restricts to homogeneous maps
$\dend(m)\otimes \dend(n) \to \dend(m+n-1)$.

In our context, the product $\#$ can be defined as follows:
\begin{equation}
  \label{def_diese}
  x \# y = -\theta^{-1}(\theta(y) \circ_1 \theta(x))
  =-\theta(\theta^{-1}(x) \circ_n \theta^{-1}(y)),
\end{equation}
where $x$ has degree $n$. The equality of the last two terms follows
from the axiom (\ref{anticyclic0}) of anticyclic operads and from the result that
$\tau=(-1)^n \theta^2$ on $\dend(n)$.

\begin{proposition}
  The $\#$ product is associative. The planar binary tree
$\Y$ is a unit for $\#$.
\end{proposition}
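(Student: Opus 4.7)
The plan is to reduce both assertions directly to the definition (\ref{def_diese}) of $\#$ and to known properties of $\theta$ and the operadic $\circ_1$.

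For the unit, I would first observe that $\theta(\Y)=-\Y$ by (\ref{theta0}), hence also $\theta^{-1}(\Y)=-\Y$ by linearity. Using the first equality in (\ref{def_diese}), one has
\begin{equation*}
\Y \# y = -\theta^{-1}(\theta(y)\circ_1 \theta(\Y)) = -\theta^{-1}(\theta(y)\circ_1(-\Y)) = \theta^{-1}(\theta(y)\circ_1 \Y) = \theta^{-1}(\theta(y)) = y,
\end{equation*}
where I used that $\Y\in\dend(1)$ is a unit for the $\circ_1$ composition. Symmetrically, using the second equality in (\ref{def_diese}) with $x\in\dend(n)$,
\begin{equation*}
x \# \Y = -\theta(\theta^{-1}(x)\circ_n \theta^{-1}(\Y)) = \theta(\theta^{-1}(x)\circ_n \Y) = \theta(\theta^{-1}(x)) = x,
\end{equation*}
since $\Y$ is also a unit for $\circ_n$.

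For associativity, I would apply the first formula in (\ref{def_diese}) twice, carefully tracking the signs. For $x,y,z\in\dend$, one computes
\begin{equation*}
(x\#y)\#z = -\theta^{-1}\bigl(\theta(z)\circ_1 \theta(x\#y)\bigr) = -\theta^{-1}\bigl(\theta(z)\circ_1\bigl(-\theta(y)\circ_1\theta(x)\bigr)\bigr) = \theta^{-1}\bigl(\theta(z)\circ_1(\theta(y)\circ_1\theta(x))\bigr),
\end{equation*}
and analogously
\begin{equation*}
x\#(y\#z) = -\theta^{-1}\bigl(\theta(y\#z)\circ_1 \theta(x)\bigr) = \theta^{-1}\bigl((\theta(z)\circ_1\theta(y))\circ_1 \theta(x)\bigr).
\end{equation*}
The equality of the two expressions then reduces to the operad axiom
\begin{equation*}
\theta(z)\circ_1\bigl(\theta(y)\circ_1\theta(x)\bigr) = \bigl(\theta(z)\circ_1\theta(y)\bigr)\circ_1 \theta(x),
\end{equation*}
which is the standard ``nested'' associativity $(a\circ_1 b)\circ_1 c = a\circ_1(b\circ_1 c)$ in any non-symmetric operad.

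There is no serious obstacle: the only subtlety is bookkeeping of the sign $-\theta^{-1}$ versus $-\theta$ (and the fact that $\theta^{-1}$ of a $\#$-product still yields a minus sign to cancel), and checking that the operadic associativity one actually needs is the easy $\circ_1$-over-$\circ_1$ axiom rather than one of the mixed axioms involving (\ref{anticyclic0}). The second formula in (\ref{def_diese}) is not needed for associativity (only for the right-unit calculation), but it provides a useful cross-check: one can redo the computation using it and obtain the same reduction, now to the axiom $(a\circ_p b)\circ_{p+q-1}c = a\circ_p(b\circ_q c)$ for appropriate arities, which is again a standard operad axiom.
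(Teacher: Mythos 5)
Your proof is correct and follows essentially the same route as the paper, whose own proof is just the one-line remark that associativity of $\#$ reduces to the operadic associativity of $\circ_1$ and that the unit property follows from $\Y$ being the unit of $\dend$. You simply spell out the sign bookkeeping and the reduction to the axiom $(a\circ_1 b)\circ_1 c = a\circ_1(b\circ_1 c)$ explicitly, which the paper leaves implicit.
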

\begin{proof}
  The associativity follows from the associativity of $\circ_1$, which
  in turn is a consequence of the axioms of operads. The fact that $\Y$ is a unit follows from the fact that is it the unit of the Dendriform operad.
\end{proof}

\begin{proposition}
  The reversal of planar binary trees is an anti-automorphism of the
  $\#$ product : $\overline{x\# y}=\overline{y} \# \overline{x}$.
\end{proposition}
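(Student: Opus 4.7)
The plan is to deduce the identity by a direct computation starting from the two equivalent expressions for $\#$ given in (\ref{def_diese}), using only the known compatibilities of reversal with $\theta$ and with the operadic compositions $\circ_i$.

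First I would rewrite $\overline{y}\#\overline{x}$ using the second form of (\ref{def_diese}). Since $\overline{y}$ has the same degree $n$ as $y$, this gives
\begin{equation*}
  \overline{y}\#\overline{x} = -\theta\bigl(\theta^{-1}(\overline{y})\circ_n \theta^{-1}(\overline{x})\bigr).
\end{equation*}
Next I would convert $\theta^{-1}$ past the bar. From (\ref{theta_reverse}) one obtains, by substituting $\theta(w)$ for $x$, the companion identity $\theta^{-1}(\overline{z})=\overline{\theta(z)}$. Applying this to both factors inside $\circ_n$ turns the expression into $-\theta\bigl(\overline{\theta(y)}\circ_n \overline{\theta(x)}\bigr)$.

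Then I would apply the compatibility (\ref{compo_reverse}) of reversal with operadic composition. Since $\theta(y)$ has degree $n$, the formula gives $\overline{\theta(y)}\circ_n\overline{\theta(x)}=\overline{\theta(y)\circ_1 \theta(x)}$ (the index $1$ flips to $n+1-1=n$). Pulling the outer bar through $\theta$ once more by (\ref{theta_reverse}) yields $-\overline{\theta^{-1}(\theta(y)\circ_1\theta(x))}$, which is $\overline{x\#y}$ by the first form of (\ref{def_diese}); the sign passes through the bar because reversal is $\mathbb{Z}$-linear.

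I do not expect any genuine obstacle: the proof is purely formal, a triangular chase between (\ref{def_diese}), (\ref{compo_reverse}) and (\ref{theta_reverse}). The only point requiring care is bookkeeping of the degree, so that the swap $\circ_n\leftrightarrow\circ_1$ via (\ref{compo_reverse}) applies correctly — this is precisely why (\ref{def_diese}) is stated with the two equivalent expressions, one using $\circ_1$ and one using $\circ_n$, allowing the computation to close up on itself.
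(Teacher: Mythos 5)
Your argument is correct and is exactly the paper's own proof, which is stated there only as a one-line sketch citing the same three ingredients: the two equivalent forms of the definition of $\#$ in (\ref{def_diese}), the relation (\ref{theta_reverse}) between $\theta$ and reversal, and the compatibility (\ref{compo_reverse}) of reversal with the compositions. Your version merely writes out the chase in full, including the correct bookkeeping of the index $n+1-1=n$ and the companion identity $\theta^{-1}(\overline{z})=\overline{\theta(z)}$, both of which check out.
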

\begin{proof}
  One uses the equivalent forms of the definition (\ref{def_diese}), the
  relation (\ref{theta_reverse}) between $\theta$ and reversal, and equation (\ref{compo_reverse})
  relating reversal, $\circ_1$ and $\circ_n$.
\end{proof}

\begin{lemma}
  \label{star_diese}
  One has
  \begin{equation}
    (x * y) \# z =x * (y \# z),
  \end{equation}
  and
  \begin{equation}
    (x \# y) * z =x \# (y * z).
  \end{equation}
\end{lemma}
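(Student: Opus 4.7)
The plan is to stay entirely inside the Dendriform operad and leverage the explicit formula (\ref{def_diese}) for the $\#$ product together with the compatibility relations (\ref{theta1})--(\ref{theta4}) between $\theta^{\pm 1}$ and the products $*$, $\over$, $\under$. I prove the first identity by a direct unfolding, and then deduce the second identity from the first by applying the reversal involution.

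For the first identity $(x*y)\#z=x*(y\#z)$, I unfold the left-hand side using the definition $a\#b=-\theta^{-1}(\theta(b)\circ_1 \theta(a))$ to obtain $-\theta^{-1}(\theta(z)\circ_1 \theta(x*y))$, then apply (\ref{theta2}) to replace $\theta(x*y)$ by $-\theta(x)/\theta(y)$. The crucial structural step is the operad identity
\begin{equation*}
c\circ_1(a/b)=a/(c\circ_1 b),
\end{equation*}
which I derive by writing $a/b=(b\circ_1\lft)\circ_1 a$ via (\ref{def_over}) and pushing $c\circ_1(-)$ past both $\circ_1$ substitutions using operad associativity (both being at position $1$, so the index conditions for the ``series'' associativity axiom are trivially met). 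Applying this identity refactors the inside to $\theta(x)/(\theta(z)\circ_1\theta(y))$; then (\ref{theta3}) turns the outer $\theta^{-1}$ of an $\over$ product into a $*$ product, giving $-x * \theta^{-1}(\theta(z)\circ_1\theta(y))$, which is $x*(y\#z)$ after collecting signs.

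For the second identity $(x\#y)*z=x\#(y*z)$, the cleanest route is to apply the reversal involution $x\mapsto\overline{x}$ to the first identity. Reversal is an anti-automorphism of $*$ (established earlier in the paper) and of $\#$ (established in the proposition immediately preceding this lemma). Taking the reversal of both sides of $(x*y)\#z=x*(y\#z)$ yields $\overline{z}\#(\overline{y}*\overline{x})=(\overline{z}\#\overline{y})*\overline{x}$, which, after the relabeling $u=\overline{z}$, $v=\overline{y}$, $w=\overline{x}$, is exactly $(u\#v)*w=u\#(v*w)$.

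The main obstacle is careful sign bookkeeping: three minus signs appear in the derivation of the first identity -- one from (\ref{def_diese}), one from (\ref{theta2}), and one from (\ref{theta3}) -- and one needs to check that they combine to $+1$. Beyond this, the only genuinely structural input is the operad identity $c\circ_1(a/b)=a/(c\circ_1 b)$; it is short, but because it iterates $\circ_1$ at position $1$ twice, one must verify the associativity indices carefully. If one preferred a direct proof of the second identity instead of invoking reversal, a parallel argument using the alternative form $a\#b=-\theta(\theta^{-1}(a)\circ_{|a|}\theta^{-1}(b))$ together with (\ref{theta4}) and the operad identity $(A\circ_m B)\under C=A\circ_m(B\under C)$ would work, but it involves more index tracking than the reversal shortcut.
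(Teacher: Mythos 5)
Your proof is correct and follows essentially the same route as the paper: unfold $\#$ via (\ref{def_diese}), apply the $\theta$-compatibilities (\ref{theta2}) and (\ref{theta3}) together with a commutation identity between $\circ_1$ and the over product, and recover the remaining equation from the reversal anti-automorphisms of $*$ and $\#$. The only (harmless) differences are that you prove the first equation directly where the paper proves the second, so you need the mirror identity $c\circ_1(a/b)=a/(c\circ_1 b)$, which you correctly derive from (\ref{def_over}) and operad associativity instead of citing the paper's pre-established (\ref{circ_over}), and that the minus signs actually come in fours rather than threes (the definition (\ref{def_diese}) is used a second time to fold $y\#z$ back up), though they do cancel as you claim.
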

\begin{proof}
  By left-right symmetry, it is enough to prove the second equation.
  By definition (\ref{def_diese}), one has
  \begin{equation}
    x \# (y *z) = -\theta^{-1}(\theta(y*z) \circ_1 \theta(x)).
  \end{equation}
  By (\ref{theta2}), this becomes
  \begin{equation}
     \theta^{-1}((\theta(y)\over \theta(z)) \circ_1 \theta(x)).
  \end{equation}
  By (\ref{circ_over}), this is
  \begin{equation}
     \theta^{-1}((\theta(y)\circ_1 \theta(x))\over \theta(z) ),
  \end{equation}
  which equals, by (\ref{theta3}) and definition (\ref{def_diese}),
  \begin{equation}
    -\theta^{-1}(\theta(y)\circ_1 \theta(x)) * z= (x \# y) *z.
  \end{equation}
\end{proof}

\begin{lemma}
  \label{under_diese}
  One has
  \begin{equation}
    x \# (y \under z) =  (x\# y )\under z. 
  \end{equation}
  and
  \begin{equation}
    (x \over y) \# z = x\over (y \# z). 
  \end{equation}
\end{lemma}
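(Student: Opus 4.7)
The plan is to prove the first equation by a direct computation entirely parallel to the proof of Lemma~\ref{star_diese}, and then to deduce the second equation by left-right symmetry using the reversal anti-automorphism. Concretely, I will replace each use of $(\ref{theta2}),(\ref{circ_over}),(\ref{theta3})$ in that earlier proof by its ``under'' analogue $(\ref{theta1}),(\ref{star_circ}),(\ref{theta4})$.

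Starting from the definition $(\ref{def_diese})$, I would unfold
\[
  x \# (y \under z) \;=\; -\theta^{-1}\bigl(\theta(y \under z)\circ_1 \theta(x)\bigr),
\]
then apply $(\ref{theta1})$ to rewrite $\theta(y\under z)=-\theta(y)*\theta(z)$, giving
\[
  x \# (y \under z) \;=\; \theta^{-1}\bigl((\theta(y)*\theta(z))\circ_1 \theta(x)\bigr).
\]
Next I would use $(\ref{star_circ})$ in the form $(u*v)\circ_1 w=(u\circ_1 w)*v$ to pull $\theta(x)$ past $\theta(z)$:
\[
  (\theta(y)*\theta(z))\circ_1 \theta(x) \;=\; (\theta(y)\circ_1 \theta(x))*\theta(z).
\]
Finally I would apply $(\ref{theta4})$ to identify
\[
  \theta^{-1}\bigl((\theta(y)\circ_1 \theta(x))*\theta(z)\bigr) \;=\; -\theta^{-1}(\theta(y)\circ_1 \theta(x)) \under z \;=\; (x \# y)\under z,
\]
using once more $(\ref{def_diese})$. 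This establishes the first equation.

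For the second equation, I would invoke the reversal map, which is an anti-automorphism of $\#$ (already proved above) and exchanges $\over$ and $\under$. Applying reversal to both sides of the desired identity $(x\over y)\# z=x\over (y\# z)$ yields
\[
  \overline{z}\#(\overline{y}\under\overline{x}) \;\stackrel{?}{=}\; (\overline{z}\#\overline{y})\under\overline{x},
\]
which is exactly the first equation applied to $\overline{z},\overline{y},\overline{x}$. Hence the second equation follows.

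There is no real obstacle here: everything is a mechanical unfolding, and the only thing to be careful about is selecting the correct ``under-flavoured'' companions of the relations used in Lemma~\ref{star_diese}. The main check is purely bookkeeping: making sure the two minus signs produced by $(\ref{theta1})$ and $(\ref{theta4})$ cancel, exactly as the two minus signs from $(\ref{theta2})$ and $(\ref{theta3})$ cancel in the earlier proof.
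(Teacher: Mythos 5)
Your proposal is correct and follows essentially the same route as the paper: the paper also reduces the second identity to the first by left-right symmetry (via the reversal anti-automorphism) and proves the first using exactly the relations $(\ref{star_circ})$, $(\ref{theta1})$, $(\ref{theta4})$ and the definition $(\ref{def_diese})$. The only difference is cosmetic — the paper starts from the identity $(\ref{star_circ})$ applied to $\theta(x),\theta(y),\theta(z)$ and then applies $\theta^{-1}$ to both sides, whereas you run the same chain of substitutions forward from $x\#(y\under z)$; the sign bookkeeping in your version checks out.
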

\begin{proof}
  By left-right symmetry, it is enough to prove the first equality.
  Using (\ref{star_circ}), one gets
  \begin{equation}
    (\theta(y)\circ_1 \theta(x))*\theta(z)
    =(\theta(y)*\theta(z))\circ_1 \theta(x).
  \end{equation}
  This becomes, by definition (\ref{def_diese}) and (\ref{theta1}),
  \begin{equation}
    -\theta (x \# y)*\theta(z)
    =-\theta(y \under z) \circ_1 \theta(x).
  \end{equation}
  This is equivalent to
  \begin{equation}
    -\theta^{-1}(\theta (x \# y)*\theta(z))
    =-\theta^{-1}(\theta(y \under z) \circ_1 \theta(x)).   
  \end{equation}
  Therefore we get, by definition (\ref{def_diese}) and (\ref{theta4}),
  \begin{equation}
    (x \# y) \under z= x \# (y \under z).
  \end{equation}
\end{proof}

The set of noncrossing trees is not closed under the $\#$ product, for
instance $\dA \# \gA$ is not a noncrossing tree. One can nevertheless
give a combinatorial description of the $\#$ product of noncrossing
trees, when it is still a noncrossing tree. Let $P$ and $Q$ be
noncrossing trees. If $P$ or $Q$ is not based, then they can be
decomposed as $*$ products and one can use Lemma \ref{star_diese} to
reduce to smaller $\#$ products. Let us now assume that $P$ and $Q$
are based. If $P=P' \over \Y$ or $Q=\Y \under Q'$, then $P \# Q$ can
be described using Lemma \ref{under_diese} as $P'\over Q$ or $P \under
Q'$, and is therefore a noncrossing tree.

\begin{proposition}
  \label{diese_proj}
  The $\#$ product of two projective elements is a projective element.
\end{proposition}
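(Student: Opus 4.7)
The plan is to reduce to the case where the left factor $P$ is based, using Proposition \ref{proj_as_list} to write $P$ as a $*$-product of based projective elements, and Lemma \ref{star_diese} to push the $\#$-action through each $*$-factor. The based case is then settled using the normal form provided by Proposition \ref{based_proj} together with Lemma \ref{under_diese}. This mirrors exactly the informal discussion already given in the paragraph preceding the proposition.

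I would first handle the based case. If $\deg(P)=1$, then $P=\Y$ is the unit of $\#$, so $P\#Q=Q$ is projective by hypothesis. If $\deg(P)\geq 2$, Proposition \ref{based_proj} writes $P=\gA\circ_1 P''=P''/\Y$ for some projective element $P''$. Applying the second identity of Lemma \ref{under_diese} (with $x=P''$, $y=\Y$, $z=Q$) and unitality of $\#$ then gives
\begin{equation*}
P\#Q=(P''/\Y)\#Q=P''/(\Y\#Q)=P''/Q,
\end{equation*}
which is a projective element by Proposition \ref{proj_1}.

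For an arbitrary projective $P$, Proposition \ref{proj_as_list} provides a unique decomposition $P=P_1*P_2*\cdots*P_k$ with each $P_i$ a based projective element. Rewriting $P=(P_1*\cdots*P_{k-1})*P_k$ and applying the first identity of Lemma \ref{star_diese} yields
\begin{equation*}
P\#Q=(P_1*\cdots*P_{k-1})*(P_k\#Q).
\end{equation*}
The based case just treated shows that $P_k\#Q$ is projective, and Proposition \ref{product_do} then implies that $P\#Q$, being a $*$-product of projective elements, is itself projective.

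Since every tool required is already in place, I do not expect any real obstacle: the proposition is essentially forced by the structural identities already established. The only point worth being careful about is invoking the unitality of $\#$ at $\Y$ when simplifying $\Y\#Q$ to $Q$ in the based step; everything else is a mechanical application of Lemmas \ref{star_diese} and \ref{under_diese} together with the closure properties from Propositions \ref{proj_1} and \ref{product_do}.
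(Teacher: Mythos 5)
Your proof is correct and follows essentially the same route as the paper: the based case via Proposition \ref{based_proj} and Lemma \ref{under_diese}, and the general case via the decomposition of Proposition \ref{proj_as_list}, Lemma \ref{star_diese}, and Proposition \ref{product_do}. The only difference is organizational: the paper runs an induction on $\deg(P)$ while you peel off the last based factor and invoke the based case directly, which amounts to the same argument.
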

\begin{proof}
  Let $P$ and $Q$ be projective elements. The proof is by induction on
  $\deg(P)$.

  If $P$ has degree $1$, then $P=\Y$, therefore $P \#Q=Q$ and the
  result is true.

  If $P$ is based, then $P=\gA \circ_1 R$ for some projective element
  $R$ by Proposition \ref{based_proj}. Then $P \# Q = (\gA \circ_1 R)
  \# Q=(R \over \Y)\# Q=R \over Q$ by Lemma \ref{under_diese}. This is
  a projective element by Prop. \ref{proj_1}.

  If $P$ is not based, then it can be written $R*R'$ for some
  projective elements $R$ and $R'$ by Proposition \ref{proj_as_list}.
  In this case, $P\# Q= (R*R')\#Q=R *(R'\#Q)$ by Lemma
  \ref{star_diese}. By induction hypothesis, $R' \# Q$ is a projective
  element. Therefore $P\# Q$ is a projective element by Prop.
  \ref{product_do}.
\end{proof}

\begin{proposition}
  The smallest subset of $\dend$ containing $\{\gA,\mA\}$ and stable
  under $\circ_1$ and $\#$ is the set of projective elements.
\end{proposition}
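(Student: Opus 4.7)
My plan is to prove the two inclusions separately. Write $S$ for the smallest subset of $\dend$ containing $\{\gA, \mA\}$ and stable under $\circ_1$ and $\#$. The inclusion $S \subseteq \{\text{projective elements}\}$ is the easy direction: $\gA$ and $\mA$ are the two projective elements of degree $2$, and the class of projective elements is closed under $\circ_1$ by Corollary \ref{circ1_proj} and under $\#$ by Proposition \ref{diese_proj}; hence $S$ lies inside the set of projectives.

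For the reverse inclusion I proceed by induction on the degree $n$. The base case $n=2$ is immediate. For the inductive step with $n \geq 3$, Proposition \ref{proj_alt} supplies two possible decompositions of a projective element $P$: either $P = Q/R$ or $P = \mA \circ_2 Q$, with $Q, R$ projective of strictly smaller degree, and thus in $S$ by induction. In the first case, Proposition \ref{based_do} (or directly formula (\ref{def_over})) gives $\gA \circ_1 Q = Q/\Y$, and the second identity of Lemma \ref{under_diese} applied with $y = \Y$ yields $(\gA \circ_1 Q) \# R = Q/(\Y \# R) = Q/R = P$, so $P \in S$. The boundary possibilities $Q = \Y$ or $R = \Y$ collapse harmlessly into $\gA \# R$ or $\gA \circ_1 Q$.

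The second case is the main obstacle and rests on the identity $\mA \circ_2 Q = \mA \# Q$. I would derive it from the two formulas of Lemma \ref{star_diese}. Setting $y = z = \Y$ in $(x \# y) * z = x \# (y * z)$ and using the unit property of $\Y$ for $\#$ gives $x * \Y = x \# (\Y * \Y) = x \# \mA$. Setting $y = \Y$ in $(x * y) \# z = x * (y \# z)$ gives $(x * \Y) \# z = x * z$. Combining these and using associativity of $\#$ yields $x * z = x \# \mA \# z$. Specialising $x = \Y$ (and invoking the unit property once more) produces $\Y * Q = \mA \# Q$, and since $\mA \circ_2 Q = \Y * Q$ by the defining formula (\ref{def_star}) of $*$, we conclude $P = \mA \circ_2 Q = \mA \# Q \in S$. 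A minor subtlety: the projective element $\Y$ of degree $1$ cannot be generated from $\{\gA, \mA\}$ by $\circ_1$ and $\#$, since these operations cannot produce degree below $2$, so the statement is implicitly restricted to degree $\geq 2$.
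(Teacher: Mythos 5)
Your argument is correct, and the easy inclusion is handled exactly as in the paper (via Corollary \ref{circ1_proj} and Proposition \ref{diese_proj}); but your inductive step for the reverse inclusion follows a genuinely different route. The paper splits on whether $P$ is based: if so, $P=\gA\circ_1 Q$ by Proposition \ref{based_proj}; if not, it invokes Proposition \ref{proj_as_list} to write $P=Q*R$ and then converts the $*$ into a $\#$ via Lemma \ref{star_diese}, namely $P=Q\#\mA$ when $R=\Y$ and $P=(Q*\Y)\#R$ when $\deg R\geq 2$ (noting that $Q*\Y$ is a strictly smaller projective element). You instead use the decomposition of Proposition \ref{proj_alt}: for $P=Q/R$ you write $P=(\gA\circ_1 Q)\#R$ using $\gA\circ_1 Q=Q/\Y$ and the second identity of Lemma \ref{under_diese}, and for $P=\mA\circ_2 Q$ you establish $\mA\circ_2 Q=\Y*Q=\mA\#Q$. (Your derivation of the latter identity via $x*\Y=x\#\mA$ and associativity is valid but roundabout; the first identity of Lemma \ref{star_diese} gives it in one line, $\mA\#Q=(\Y*\Y)\#Q=\Y*(\Y\#Q)=\Y*Q$.) The trade-off: the paper's version leans on the $*$-factorization machinery of Proposition \ref{proj_as_list} but needs only Lemma \ref{star_diese}, whereas yours stays closer to the primitive alternative of Proposition \ref{proj_alt} at the cost of also invoking Lemma \ref{under_diese}; both handle the degenerate degree-one factors correctly. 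Your explicit remark that $\Y$ itself cannot be generated (so the statement really concerns degrees $\geq 2$) is a point the paper leaves implicit in its choice of base case $n=2$, and is worth recording.
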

\begin{proof}
  Let us call this set $\mathscr{A}$. Then $\mathscr{A}$ is contained
  in the set of projective elements, because $\gA$ and $\mA$ are
  projective elements and $\circ_1$ and $\#$ preserves projective
  elements by Proposition \ref{circ1_proj} and Proposition \ref{diese_proj}.

  Let us prove the reverse inclusion by induction on the degree $n$.
  This is true if $n=2$. Let $P$ be a projective element of degree at
  least $3$. If $P$ is based, then $P$ can be written $\gA \circ_1 Q$
  for some projective element $Q$, by Proposition \ref{based_proj}. By
  induction, $Q$ is in $\mathscr{A}$, hence $P$ is in $\mathscr{A}$.

  If $P$ is not based, then it can be written $Q*R$ with $Q$ and $R$
  smaller projective elements. If $R=\Y$, then $P=(Q\#\Y)*\Y=Q \# \mA$
  by Lemma \ref{star_diese}. But $Q$ is in $\mathscr{A}$ by induction,
  therefore $P$ too. If $R$ has degree at least $2$, then one has $P=Q
  * (\Y \# R)=(Q * \Y) \# R$ by Lemma \ref{star_diese}. By induction,
  the projective elements $Q*\Y$ and $R$ are in $\mathscr{A}$.
  Therefore $P$ itself is in $\mathscr{A}$.
\end{proof}

It follows from Prop. \ref{diese_proj}, by the same kind of argument as used before for $\circ_1$, that one can define a functor $\#$ from the
category of $\bt_m \times \bt_n$ modules to the category of $\bt_{m+n-1}$ modules, that is a categorification of the $\#$ product.

This implies the following result.

\begin{proposition}
  The $\#$ product respects the Euler form of the Tamari posets: one has $E(x\#y)=E(x)\#E(y)$.
\end{proposition}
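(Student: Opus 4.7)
The plan is to replicate the strategy used earlier for $\circ_1$ and $*$: construct a categorifying tri-module for $\#$ and then deduce the identity automatically from the categorical interpretation of the Euler form as the alternating sum of $\dim\operatorname{Ext}$ groups in $\mod \bt_{m+n-1}$. The key input is Prop.~\ref{diese_proj}, which guarantees that $\ps(x)\#\ps(y)$ is a projective element of $\dend(m+n-1)$ for every $x\in\bt_m$ and $y\in\bt_n$, so that we have a well-defined family of projective $\bt_{m+n-1}$-modules indexed by pairs $(x,y)$.

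Concretely, I would first introduce, in analogy with $\ms_{m,1}^n$, the subset
\begin{equation}
  \ms_{m,\#}^n = \{(x,y,z)\in \bt_m\times\bt_n\times\bt_{m+n-1} \mid z\in\ps(x)\#\ps(y)\},
\end{equation}
and define $\tm_{m,\#}^n$ to be the module over $\bt_m^{op}\times\bt_n^{op}\times\bt_{m+n-1}$ that puts a copy of $\QQ$ at each point of $\ms_{m,\#}^n$ and the identity (respectively zero) on every arrow whenever it is possible (respectively impossible). Restricted to the arrows of $\bt_{m+n-1}$, this module is by construction a direct sum of the projective modules corresponding to $\ps(x)\#\ps(y)$, so it is a projective $\bt_{m+n-1}$-module.

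The main verification is that arrows in $\bt_m$ and $\bt_n$ act on $\tm_{m,\#}^n$ in a compatible way. For an edge $x\to x'$ in the Hasse diagram of $\bt_m$, Lemma~\ref{maps_are_pivots} produces a good pivot $\ps(x')\to\ps(x)$; I would then argue, from the combinatorial description of $\#$ on projective elements that is spelled out just before Prop.~\ref{diese_proj} (using the decompositions of Prop.~\ref{proj_as_list} and Prop.~\ref{based_proj}, together with Lemmas~\ref{star_diese}~and~\ref{under_diese}), that this good pivot propagates to a sequence of good pivots from $\ps(x')\#\ps(y)$ to $\ps(x)\#\ps(y)$; the same argument applies on the $\bt_n$ side. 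By Prop.~\ref{pivots_are_maps} each such sequence gives a morphism of projective $\bt_{m+n-1}$-modules, and by Lemma~\ref{tout_commute} this morphism is independent of the choice of sequence. This simultaneously yields the commuting relations between the $\bt_m$, $\bt_n$ and $\bt_{m+n-1}$ actions, proving that $\tm_{m,\#}^n$ is a module over $\bt_m^{op}\times\bt_n^{op}\times\bt_{m+n-1}$ with all commuting relations. I expect this propagation-of-good-pivots step to be the main technical obstacle, since $\#$ does not have a single clean combinatorial description on noncrossing trees; the argument has to go through the $*$/over decompositions of Prop.~\ref{proj_as_list} and Prop.~\ref{based_proj}.

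Once $\tm_{m,\#}^n$ is in place, tensoring with it yields a functor $\#$ from $\mod\bt_m\times\mod\bt_n$ to $\mod\bt_{m+n-1}$ whose effect on the classes of simple modules is, by construction, the $\#$ product of the corresponding planar binary trees. Since the Euler form is the alternating sum of $\dim\operatorname{Ext}$ groups in $\mod\bt_n$, and such an exact tri-functor is automatically multiplicative on Euler characteristics, one obtains $E(x\#y)=E(x)E(y)$ as claimed.
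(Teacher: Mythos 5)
Your proposal matches the paper's approach exactly: the paper defines the functor $\#$ "by the same kind of argument as used before for $\circ_1$," relying on Prop.~\ref{diese_proj} in place of Corollary~\ref{circ1_proj}, and then deduces the Euler-form identity automatically from the categorical interpretation of the Euler form. You have simply spelled out the tri-module construction that the paper leaves implicit, correctly identifying the propagation of good pivots through the $\#$ product as the step requiring the combinatorial decompositions of Prop.~\ref{proj_as_list} and Prop.~\ref{based_proj}.
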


The existence of the $\#$ functor also implies that every $\bt_m
\times \bt_n$-module is sent to a $\bt_{m+n-1}$ module. At the level
of the Grothendieck group, this implies that the $\#$ product is
positive on positive elements.

Also one can deduce that the $\#$ product has the following property:
\begin{equation}
  \left(\sum_{s\in \bt_m} s\right) \# \left(\sum_{t\in \bt_n} t\right) = \sum_{u\in \bt_{m+n-1}} u.
\end{equation}

Together with positivity, this implies that the $\#$ product of two sums of planar binary trees without multiplicity is a sum of planar binary trees without multiplicity.

\begin{proposition}
  Let $S$ be a subset of $\bt_m$ and $T$ be a subset of $\bt_n$. Then
  $S \# T$ is the sum over a subset of $\bt_{m+n-1}$.
\end{proposition}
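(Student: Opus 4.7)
The plan is to deduce the statement from two facts established in the paragraphs immediately preceding it: first, that the $\#$ product of positive elements is positive (a consequence of the existence of the $\#$ functor between module categories, via the Grothendieck group); second, the totalization identity
\[
\Bigl(\sum_{s\in\bt_m} s\Bigr)\#\Bigl(\sum_{t\in\bt_n} t\Bigr) \;=\; \sum_{u\in\bt_{m+n-1}} u.
\]

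Write $S^c=\bt_m\setminus S$ and $T^c=\bt_n\setminus T$, and, abusing notation exactly as in the statement, identify any subset with the sum of its elements in the corresponding $\dend(k)$. Then $\bt_m=S+S^c$ and $\bt_n=T+T^c$, so bilinearity of $\#$ combined with the totalization identity gives
\[
\sum_{u\in\bt_{m+n-1}} u \;=\; S\#T \;+\; S\#T^c \;+\; S^c\#T \;+\; S^c\#T^c.
\]

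By positivity, each of the four summands on the right is a nonnegative integer combination of elements of $\bt_{m+n-1}$. Since their sum is the multiplicity-free sum of all elements of $\bt_{m+n-1}$, each summand must itself have all coefficients in $\{0,1\}$, and the four supports necessarily partition $\bt_{m+n-1}$. In particular $S\#T$ is the indicator sum of a subset of $\bt_{m+n-1}$, which is the desired conclusion.

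There is essentially no obstacle here: the whole argument is a cancellation/partition trick once positivity and the totalization identity are in hand. The only step that warrants a moment's attention is the verification of positivity, and this follows at once from the existence of the $\#$ functor, since a nonnegative integer combination of simples is the class of a genuine module, whose image under $\#$ is again a genuine module and therefore a nonnegative class in the Grothendieck group.
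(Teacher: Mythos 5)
Your argument is correct and is exactly the one the paper intends: the proposition is stated as an immediate consequence of the positivity of $\#$ (coming from the functor) together with the totalization identity, and your complement-decomposition simply makes that deduction explicit. No discrepancy with the paper's approach.
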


\bibliographystyle{alpha}
\bibliography{dendriformania}

\end{document}